\DeclareSymbolFont{cyrletters}{OT2}{wncyr}{m}{n}
\DeclareMathSymbol{\Be}{\mathalpha}{cyrletters}{"42}
\newtheorem{thm}{Theorem}[section]  
\newtheorem{pro}[thm]{Proposition}  
\newtheorem{cor}[thm]{Corollary}    
\newtheorem{lem}[thm]{Lemma}        
\newtheorem{ex}[thm]{Example}       
\newtheorem{rk}[thm]{Remark} 
\newtheorem{definition}[thm]{Definition}
\title[The finitude and structure of $G_S(K,p)$]{On the Finiteness and Structure of Galois Groups of Tamely ramified pro-p Extensions of Imaginary Quadratic Fields}
\author{Qi Liu}
\author{Zugan Xing}
\date{2024.Oct}
\keywords{Ray class fields, Schur multiplicator, Powerful pro-p groups, Restricted ramification}
\subjclass{11R32, 11R37, 12F10, 11R11, 20D15}
\begin{document}


\address{(Qi Liu) Department of Mathematics, Capital Normal University, Beijing,
China}
\email{QiLiu67@aliyun.com}
\address{(Zugan Xing) Department of Mathematics, Capital Normal University, Beijing,
China}
\email{xingzugan@aliyun.com}
\setlength{\belowcaptionskip}{-5pt}
\setlength{\abovecaptionskip}{-5pt}

\begin{abstract}
For a prime $p$, we study the Galois groups of maximal pro-$p$ extensions of imaginary quadratic fields unramified outside a finite set $S$, where $S$ consists of one or two finite places not lying above $p$. When $p$ is odd, we provide explicit presentations of these Galois groups under certain conditions. As an application, we determine the defining polynomial of the maximal pro-$3$ extension of $\mathbb{Q}(i)$ unramified outside two specific finite places. 
\end{abstract}

\maketitle



\section{Introduction}
\subsection{Background and Previous Work}
Let \( p \) be a rational prime number, \( K \) a number field, and \( S \) a finite set of places of \( K \). We define \( K_S \) to be the maximal pro-\( p \) extension of \( K \)  unramified outside the set \( S \).

In particular, when \( S = \varnothing \), the field \( K_S \) corresponds to the classical \( p \)-class field tower of \( K \), a well-studied object in algebraic number theory. The structure and finiteness of the Galois group \( G_S(K,p) \) of the extension \( K_S / K \) form a long-standing problem in the field. If \( S \) contains places of \( K \) above \( p \), the abelianization \( G_S(K,p)^{\text{ab}} \) can be infinite. For example, when \( K = \mathbb{Q} \) and \( S = \{p\} \), \( G_S(K,p)^{\text{ab}} \) is the cyclotomic \( \mathbb{Z}_p \)-extension of \( \mathbb{Q} \). On the other hand, if \( S \) consists only of places that do not lie over \( p \), then \( G_S(K,p)^{\text{ab}} \) is always finite, a result known as the "tame case" (see \cite{hajir2003tame}). This case, however, has not been fully understood. We will focus on the tame case in this article.

The classical approach to determining the infiniteness of \( G_S(K,p) \) involves the theorem of Golod and Shafarevich \cite{golod1964class}, which states that for a pro-\( p \) group \( G \), if the generator and relation ranks \( d(G) \) and \( r(G) \) satisfy \( \frac{d(G)^2}{4} \geq r(G) \), then \( G \) must be infinite. In the tame case, Farshid Hajir and Christian Maire \cite{hajir2001tamely} extended this method and showed that \( G_S(K,p) \) is infinite if the \( p \)-rank of the ray class group \( \text{Cl}_K(\mathfrak{m}) \) (modulo \( \mathfrak{m} = \prod_{\mathfrak{q} \in S} \mathfrak{q} \)) satisfies the inequality
\[
\text{rk}_p\left( \text{Cl}_K(\mathfrak{m}) \right) \geq 2 + 2 \sqrt{r_K + \theta_{K,S} + 1},
\]
where \( r_K \) is the number of infinite places of \( K \), and \( \theta_{K,S} \) is 1 if \( S \) is empty and \( K \) contains the primitive \( p \)-th roots of unity, and 0 otherwise.

The primary method for further investigations into the finiteness and structure of \( G_S(K,p) \) in the tame case is the lower \( p \)-central series of \( G_S(K,p) \). Specifically, if \( G_S(K,p)^{(i,p)} \) stabilizes at some level \( i \), then \( G_S(K,p) \) is finite. This approach has led to significant results, such as the work of Helmut Koch \cite{koch1965} (also see \cite{koch2002galois}, Chapter 11.5), who identified a family of sets \( S \) formed by two distinct prime numbers, neither dividing the odd prime \( p \), for which \( G_S(\mathbb{Q},p) \) is a non-Abelian group of order \( p^3 \) and exponent \( p^2 \). 

For \( p = 2 \), Boston and Perry \cite{boston2000maximal} showed that if \( S = \{p, q\} \) with \( p, q \equiv 3 \pmod{4} \), then \( G_S(\mathbb{Q},2) \) is a semidihedral group, and for \( p \equiv 3 \pmod{4} \) and \( q \equiv 1 \pmod{4} \), \( G_S(\mathbb{Q},2) \) is a modular group. In subsequent work \cite{boston2002explicit}, Boston and Leedham-Green developed a computational algorithm for determining \( G_S(\mathbb{Q},2) \). Using this approach, Mizusawa \cite{yasushi2016certain} proved that when \( S \) consists of exactly three odd rational primes, \( G_S(\mathbb{Q},2) \) is a finite 2-group of order \( 2^9 \).

For more general number fields, recent work by Yoonjin Lee and Donghyeok Lim \cite{lee2024finitude} has introduced a new method for determining the finiteness of \( G_S(K,p) \) by checking the powerfulness of the group. They examine the finiteness of \( G_S(K,p) \) for fields with a non-trivial cyclic \( p \)-class group, where \( S \) contains exactly one non-\( p \)-adic prime ideal \( \mathfrak{q} \).

\subsection{Main Results and Organization}

In this paper, we continue the work of Boston and Lim by focusing on imaginary quadratic fields. 

Let $K$ be an imaginary quadratic field that does not contain the primitive $p$-th roots of unity. In other words, we consider $p$ to be an odd prime number, and when $p=3$, $K$ is not equal to $\mathbb{Q}(\sqrt{-3})$. We have the following main theorems:

\begin{thm}\label{s=1}
If $p$-class group of \(K\) is isomorphic to $\mathbb{Z}/p\mathbb{Z}$, then:

1. There exist infinitely many non-$p$-adic prime ideals $\mathfrak{q}$ of $K$ that are inert in the $p$-class field $H_p(K)$ of $K$, with the $p$-rank of ray class group $Cl_K(\mathfrak{q})$ greater than 1. 

2. For any such prime ideal $\mathfrak{q}$, we have  
$$G_S(K,p) = \left\langle a,b \mid a^{p^{n-1}} = 1, \, b^p = 1, \, b^{-1}ab = a^{1+p^{n-2}} \right\rangle,$$
where $S = \{\mathfrak{q}\}$ and $n-1 \geq 2$ equals \(\operatorname{ord}_p\left(\left|Cl_{H_p(K)}(\mathfrak{q}')\right|\right)\), with \(\mathfrak{q}'\) being the unique prime ideal of $H_p(K)$ lying above $\mathfrak{q}$.
\end{thm}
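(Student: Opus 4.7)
For Part~1, I would run a single Chebotarev density argument. Choose an integral ideal $\mathfrak{a}$ of $K$, coprime to $p$, whose class generates $Cl_K^{(p)} \cong \mathbb{Z}/p$, and write $\mathfrak{a}^p = (\alpha)$ for some $\alpha \in K^\ast$. Consider the Galois extension $N = H_p(K) \cdot K(\zeta_p, \alpha^{1/p})$ of $K$. The Galois group of $K(\zeta_p, \alpha^{1/p})/K$ is the semidirect product $\mathbb{Z}/p \rtimes \mathrm{Gal}(K(\zeta_p)/K)$ in which the cyclotomic character acts faithfully on $\mathbb{Z}/p$, so its Sylow $p$-subgroup is not normal and this field has no Galois subextension of degree $p$ over $K$; combined with $H_p(K)/K$ being itself Galois of degree $p$, this forces $H_p(K) \cap K(\zeta_p,\alpha^{1/p}) = K$. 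Therefore $\mathrm{Gal}(N/K) \cong \mathrm{Gal}(H_p(K)/K) \times \mathrm{Gal}(K(\zeta_p,\alpha^{1/p})/K)$, and Chebotarev supplies infinitely many primes $\mathfrak{q}$ of $K$ whose Frobenius projects to a generator in the first factor and the identity in the second; equivalently, $\mathfrak{q}$ is inert in $H_p(K)/K$, satisfies $N\mathfrak{q} \equiv 1 \pmod p$, and has $\alpha \in (K_{\mathfrak{q}}^\ast)^p$.

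For any such $\mathfrak{q}$, I would extract the $p$-rank from the exact sequence
\[
0 \longrightarrow (\mathcal{O}_K/\mathfrak{q})^\ast_{(p)} \longrightarrow Cl_K(\mathfrak{q})^{(p)} \longrightarrow Cl_K^{(p)} \longrightarrow 0,
\]
obtained by taking $p$-parts of the standard ray class sequence and noting that the image of $\mathcal{O}_K^\ast$ drops out under our hypotheses (since $|\mathcal{O}_K^\ast|$ is coprime to $p$). Lifting the generator of $Cl_K^{(p)}$ to $[\mathfrak{a}] \in Cl_K(\mathfrak{q})^{(p)}$, its $p$-th power is represented by $\overline{\alpha} \in (\mathcal{O}_K/\mathfrak{q})^\ast_{(p)}$; because $\alpha$ is a $p$-th power in $K_{\mathfrak{q}}^\ast$, this lands in $p\cdot(\mathcal{O}_K/\mathfrak{q})^\ast_{(p)}$, so the extension class of the sequence vanishes. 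Consequently $Cl_K(\mathfrak{q})^{(p)} \cong \mathbb{Z}/p^m \oplus \mathbb{Z}/p$ with $m = v_p(N\mathfrak{q}-1) \geq 1$, and the $p$-rank is $2$.

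For Part~2, set $G = G_S(K,p)$ and $H = \mathrm{Gal}(K_S/H_p(K))$, so $G/H \cong \mathbb{Z}/p$ and $d(G) = 2$ via $G^{\mathrm{ab}} \cong Cl_K(\mathfrak{q})^{(p)}$. Since $\mathfrak{q}$ is inert in $H_p(K)/K$, the prime $\mathfrak{q}'$ of $H_p(K)$ above it is the unique such prime and is fixed by $\mathrm{Gal}(H_p(K)/K)$; the ray class field of $H_p(K)$ modulo $\mathfrak{q}'$ is accordingly Galois and pro-$p$ over $K$, lies inside $K_S$, and equals the maximal subfield of $K_S$ abelian over $H_p(K)$. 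Hence $H^{\mathrm{ab}} \cong Cl_{H_p(K)}(\mathfrak{q}')^{(p)}$. Combining the analogous ray class sequence over $H_p(K)$ with $|(\mathcal{O}_{H_p(K)}/\mathfrak{q}')^\ast|_{(p)} = p^{m+1}$ (the residue field at $\mathfrak{q}'$ has size $(N\mathfrak{q})^p$) and the hypothesis $|Cl_{H_p(K)}(\mathfrak{q}')|_{(p)} = p^{n-1}$, I deduce $Cl_{H_p(K)}^{(p)} = 0$ and $n-1 = m+1$. Thus $H^{\mathrm{ab}}$ is cyclic of order $p^{n-1}$, and since $H$ is a pro-$p$ group with finite cyclic abelianization, $H \cong \mathbb{Z}/p^{n-1}$.

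It then remains to identify the extension $1 \to \mathbb{Z}/p^{n-1} \to G \to \mathbb{Z}/p \to 1$. The conjugation action of $G/H$ on $H$ factors through the unique subgroup of order $p$ in $\mathrm{Aut}(\mathbb{Z}/p^{n-1}) \cong (\mathbb{Z}/p^{n-1})^\ast$, so after rescaling the lift of the generator of $G/H$ it is either trivial or $a \mapsto a^{1+p^{n-2}}$. The trivial case would make $G$ abelian of order $p^n$, contradicting $|G^{\mathrm{ab}}| = |Cl_K(\mathfrak{q})^{(p)}| = p^{n-1}$. For the nontrivial action, the Tate cohomology group $H^2(\mathbb{Z}/p,\mathbb{Z}/p^{n-1}) \cong M^C/NM$ vanishes, because the norm acts on $\mathbb{Z}/p^{n-1}$ as multiplication by $\bigl((1+p^{n-2})^p-1\bigr)/p^{n-2}$, which is $p$ times a unit, so $NM = pM = M^C$. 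Therefore the extension splits, and $G$ has the presentation asserted in the theorem. The main technical obstacle is the identification $H^{\mathrm{ab}} \cong Cl_{H_p(K)}(\mathfrak{q}')^{(p)}$ together with the accompanying deduction that the order hypothesis forces $Cl_{H_p(K)}^{(p)}$ to vanish; once $H$ has been pinned down, everything else is a short group-theoretic computation.
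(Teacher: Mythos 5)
Part 1 of your plan is sound and is essentially the paper's own argument with the governing field made explicit: your $K(\zeta_p,\alpha^{1/p})$ is exactly $\operatorname{Gov}(K)$ here, and your splitting of the ray class sequence at $\mathfrak{q}$ is a hands-on version of the Gras--Munnier criterion the paper invokes. Likewise, your identification of $H=\operatorname{Gal}(K_S/H_p(K))$ with the inertia group, the computation $H^{\mathrm{ab}}\cong Cl_{H_p(K)}(\mathfrak{q}')^{(p)}$, and the final splitting argument via $\widehat{H}^0(\mathbb{Z}/p,\mathbb{Z}/p^{n-1})=0$ are all fine (the last is a reasonable substitute for the paper's citation of the classification of $p$-groups with a cyclic maximal subgroup).

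The gap is in the step ``I deduce \dots $n-1=m+1$,'' and it is not cosmetic. The ray class sequence over $H_p(K)$ only gives a surjection $(\mathcal{O}_{H_p(K)}/\mathfrak{q}')^\times_{(p)}\cong\mathbb{Z}/p^{m+1}\twoheadrightarrow Cl_{H_p(K)}(\mathfrak{q}')^{(p)}$, hence $n-1\le m+1$. Equality requires the global units of $H_p(K)$ to have trivial image in the $p$-part of the residue ring; but $H_p(K)$ is a degree-$2p$ totally imaginary field of unit rank $p-1\ge 2$, so this is not automatic (one can only bound the unit image inside the order-$p$ kernel of the norm to $(\mathcal{O}_K/\mathfrak{q})^\times_{(p)}$, leaving both $n-1=m$ and $n-1=m+1$ open). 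Since $|G|=p^n$ and $|G^{\mathrm{ab}}|=|Cl_K(\mathfrak{q})^{(p)}|=p^{m+1}$, the equality $n-1=m+1$ is \emph{equivalent} to $G$ being non-abelian --- and you then use it to rule out the trivial conjugation action, i.e.\ to prove $G$ is non-abelian. The argument is circular at exactly the point where the real content lies. The paper closes this with a genuinely global input: Watt's theorem that $G_S(K,p)$ has trivial Schur multiplier for an imaginary quadratic $K$ with $\zeta_p\notin K$, combined with the fact that a non-cyclic finite abelian group has non-trivial Schur multiplier; since $d(G)=2$, $G$ cannot be abelian, and only then does $n=m+2$ follow. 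You need this (or some comparable global argument) to make your Part 2 go through.
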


Moreover, if the prime ideal $\mathfrak{q}$ satisfies the above conditions and additionally $p \Vert N(\mathfrak{q}) - 1$, then the order of $G_S(K,p)$ must be $p^3$. We will prove this in Corollary \ref{p^3}. When $\#S = 2$, we can also determine the structure of $G_S(K,p)$ under certain conditions.

\begin{thm}\label{thm 1.1}
 Let \( \mathfrak{q}_1 \) be a prime ideal of \( K \) not lying above \( p \) such that \( p \Vert N(\mathfrak{q}_1) - 1 \). Define \( M(\mathfrak{q}_1, p) \) to be the unique degree \( p \) extension of \( K \) contained in the ray class field modulo \( \mathfrak{q}_1 \). For any prime ideal $\mathfrak{q}_2$ of $K$, not lying above $p$, that satisfies $p \mid N(\mathfrak{q}_2) - 1$ and is inert in $M(\mathfrak{q}_1, p)$, the Galois group $G_S(K,p)$ has the following presentation:
\[
G_S(K,p) = \left\langle a, b \mid a^{p^{n-1}} = 1, \, b^p = 1, \, b^{-1}ab = a^{1 + p^{n-2}}\right \rangle,
\]
where $S = \{\mathfrak{q}_1, \mathfrak{q}_2\}$ and $n - 1 \ge 2$ equals the $\operatorname{ord}_p\left(\left|Cl_{M(\mathfrak{q}_1,p)}\left(\mathcal{Q}_1\mathcal{Q}_2\right)\right|\right)$, with $\mathcal{Q}_i$ denoting the prime ideal of $M(\mathfrak{q}_1, p)$ lying above $\mathfrak{q}_i$ for $i = 1, 2$.
\end{thm}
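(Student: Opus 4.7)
The plan is to mirror the strategy of Theorem~\ref{s=1}, with the ray class subextension $L := M(\mathfrak{q}_1, p)$ of degree $p$ over $K$ playing the role of the $p$-Hilbert class field there. Set $G := G_S(K,p)$.

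First I would compute the generator rank. By Artin reciprocity, $G^{\mathrm{ab}} \cong Cl_K(\mathfrak{q}_1\mathfrak{q}_2)^{(p)}$. The hypothesis that $M(\mathfrak{q}_1,p)$ is \emph{the unique} degree-$p$ subextension of the ray class field modulo $\mathfrak{q}_1$ forces $\mathrm{rk}_p Cl_K(\mathfrak{q}_1) = 1$; combined with $p\Vert N(\mathfrak{q}_1)-1$ and the vanishing of the $p$-part of $\mathcal{O}_K^*$ (which holds since $p$ is odd and $K\neq \mathbb{Q}(\sqrt{-3})$ when $p=3$), the standard exact sequence for $Cl_K(\mathfrak{q}_1)$ forces the $p$-part of $Cl_K$ to be trivial. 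Then the analogous sequence for $Cl_K(\mathfrak{q}_1\mathfrak{q}_2)$ yields
\[
G^{\mathrm{ab}} \cong \mathbb{Z}/p \oplus \mathbb{Z}/p^{e_2}, \qquad p^{e_2}\Vert N(\mathfrak{q}_2)-1,
\]
so in particular $d(G)=2$.

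Next, since $L \subset K_S$, setting $H := \mathrm{Gal}(K_S/L)$ gives a short exact sequence $1\to H\to G\to \mathrm{Gal}(L/K)\to 1$. Because $\mathfrak{q}_1$ is totally ramified in $L/K$ (as it is the conductor) and $\mathfrak{q}_2$ is inert by hypothesis, the primes of $L$ above $S$ form the set $T=\{\mathcal{Q}_1,\mathcal{Q}_2\}$, and checking both inclusions gives $K_S=L_T$, hence $H=G_T(L,p)$. The core step is to prove that $H$ is cyclic of order $p^{n-1}$. By class field theory, $H^{\mathrm{ab}}\cong Cl_L(\mathcal{Q}_1\mathcal{Q}_2)^{(p)}$ has order $p^{n-1}$ by the definition of $n$, so it suffices to show $\mathrm{rk}_p Cl_L(\mathcal{Q}_1\mathcal{Q}_2)=1$: then $H^{\mathrm{ab}}$ is cyclic, whence $H$ itself is cyclic by Burnside's basis theorem. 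I would establish this rank bound by applying Chevalley's ambiguous-class-number formula to the cyclic degree-$p$ extension $L/K$: the $\mathrm{Gal}(L/K)$-coinvariants of $Cl_L(\mathcal{Q}_1\mathcal{Q}_2)^{(p)}$ surject onto $Cl_K(\mathfrak{q}_1\mathfrak{q}_2)^{(p)}$ (rank $2$) with kernel controlled by ramification and global units, and accounting for the local unit contributions at $\mathcal{Q}_1$ and $\mathcal{Q}_2$ (whose $p$-valuation at $\mathcal{Q}_2$ is lifted from $e_2$ to $e_2+1$ by the lifting-the-exponent lemma) pins the total rank at $1$ and forces the relation $n-1 = e_2+1$.

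Finally, with $H=\langle a\rangle\cong \mathbb{Z}/p^{n-1}$, conjugation of $a$ by any lift $b$ of a generator of $\mathrm{Gal}(L/K)$ gives an automorphism of $H$ of order dividing $p$, so $b^{-1}ab=a^{1+p^k}$ for some $k\geq n-2$. Matching the abelianization of the resulting extension against $G^{\mathrm{ab}}\cong \mathbb{Z}/p\oplus \mathbb{Z}/p^{e_2}$, together with the equality $e_2=n-2$ obtained in the rank computation above, singles out $k=n-2$. A local argument at $\mathfrak{q}_2$ (whose decomposition group in $G$ surjects onto $\mathrm{Gal}(L/K)$ via a tame Frobenius) then furnishes a lift $b$ of order exactly $p$, yielding the required semi-direct product and hence the presentation. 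The principal difficulty is the rank calculation in the middle step: the naive local-unit count only yields $\mathrm{rk}_p Cl_L(\mathcal{Q}_1\mathcal{Q}_2)\leq 2$, and sharpening this to $1$ requires genuine arithmetic input through genus theory over $L/K$ and control of the global units of $L$.
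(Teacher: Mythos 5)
The overall skeleton of your argument matches the paper's: identify a cyclic normal subgroup of index $p$ in $G_S(K,p)$, then invoke the classification of $p$-groups with a cyclic maximal subgroup to obtain the presentation. But the step you yourself flag as the ``principal difficulty'' --- proving that $H=\operatorname{Gal}(K_S/L)$ is cyclic, equivalently that $\operatorname{rk}_p Cl_L(\mathcal{Q}_1\mathcal{Q}_2)=1$ --- is exactly the step you do not carry out, and the mechanism you sketch for it does not work as stated. You assert that the $\operatorname{Gal}(L/K)$-coinvariants of $Cl_L(\mathcal{Q}_1\mathcal{Q}_2)^{(p)}$ surject onto $Cl_K(\mathfrak{q}_1\mathfrak{q}_2)^{(p)}$, which you have just computed to have rank $2$; but a quotient of $Cl_L(\mathcal{Q}_1\mathcal{Q}_2)^{(p)}$ cannot have $p$-rank $2$ if the group itself has rank $1$, so this surjection is incompatible with the conclusion you are aiming for (in fact the coinvariants correspond to $\operatorname{Gal}(K_S^{\mathrm{ab}}/L)$, which is cyclic of order $p^{e_2}$). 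More substantively, a direct Chevalley/genus-theory computation over $L/K$ would require control of the $p$-class group and the global units of $L$, a field of unit rank $p-1$, and nothing in your setup provides that. The paper sidesteps all of this: since $D_{\mathfrak{Q}_2}=G$ and the fixed field of the inertia group $I_{\mathfrak{Q}_2}$ is an abelian extension of $K$ unramified outside $\mathfrak{q}_1$, Lemmas \ref{lem2.4} and \ref{lem 2.5} force that fixed field to equal $M(\mathfrak{q}_1,p)$, so $\operatorname{Gal}(K_S/L)=I_{\mathfrak{Q}_2}$ is cyclic \emph{for free}, because tame inertia is cyclic. The equality $n-1=\operatorname{ord}_p|Cl_L(\mathcal{Q}_1\mathcal{Q}_2)|$ then comes out as a consequence, rather than being an input you must establish by hand.

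Two smaller points. First, your final step proposes to produce a lift $b$ of order exactly $p$ from ``a tame Frobenius'' at $\mathfrak{q}_2$; a Frobenius lift in $D_{\mathfrak{Q}_2}=G$ has no a priori reason to have order $p$, and this is unnecessary anyway --- once $G$ is known to be non-abelian of order $p^n$ with a cyclic subgroup of order $p^{n-1}$ and $p$ is odd, Lemma \ref{extra group} hands you the presentation directly. Second, your route to non-abelianness (comparing $|H|=p^{n-1}$ with $G^{\mathrm{ab}}$) depends on the unproven identity $n-1=e_2+1$; the paper instead gets non-abelianness from Watt's theorem (Theorem \ref{thm 4.3}) that $G_S(K,p)$ has trivial Schur multiplier, combined with Corollary \ref{non-schur}, which requires no computation of $n$ at all. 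You should adopt the inertia-field argument for the cyclicity of $H$ and the Schur-multiplier argument for non-abelianness; as written, the proposal has a genuine gap at its central step.
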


The paper is organized as follows. Section 2 presents essential facts about ray class groups of number fields, which are crucial for proving the finiteness of \(G_S(K,p)\) and calculating its order. In Section 3, we discuss the finiteness of \(G_S(K,p)\) in three different cases. Section 4 contains the proofs of our main structural results (Theorems \ref{s=1} and Theorem \ref{thm 1.1}). 

Finally, in Section 5, we provide explicit computations for the maximal pro-3 extension of \(\mathbb{Q}(i)\) unramified outside a specific set \(S\) of two finite places, including its defining polynomials and intermediate fields.

\section{The ray class group}

Let $K$ be a number field and $\mathfrak{m}$ a modulus of $K$. We denote the ray class field of $K$ modulo $\mathfrak{m}$ by $K(\mathfrak{m})$ and the ray class group by $Cl_K(\mathfrak{m})$. In this section, we assume that $\mathfrak{m}$ is a product of finite places of $K$.

\begin{lem}[\cite{cohen2012advanced}, Propostion 3.2.3.]\label{2.1}
Let $\mathcal{O}_{K}$ be the ring of integers of $K$, $E_{K}$ the unit group of $\mathcal{O}_{K}$ and $E_{K}(\mathfrak{m})$ is the group of units congruent to 1 modulo $\mathfrak{m}$. We have the following exact sequence
$$ 1\to  E_{K} /E_{K}(\mathfrak{m})\stackrel{\rho}{\to} (\mathcal{O} _{K} /\mathfrak{m})^{\times}\to Cl_K(\mathfrak{m})\to Cl_K(1)\to1.$$
\end{lem}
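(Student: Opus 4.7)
The plan is to set up each of the three maps explicitly and then verify exactness at the four positions. Write $I_K(\mathfrak{m})$ for fractional ideals of $K$ coprime to $\mathfrak{m}$, and $P_K(\mathfrak{m}) \subseteq P_K \subseteq I_K(\mathfrak{m})$ for the subgroup of principal ideals admitting a generator $\equiv 1 \pmod{\mathfrak{m}}$ and for all principal ideals coprime to $\mathfrak{m}$, respectively. Then $Cl_K(\mathfrak{m}) = I_K(\mathfrak{m})/P_K(\mathfrak{m})$ and $Cl_K(1) \cong I_K(\mathfrak{m})/P_K$. The map $\rho$ is reduction mod $\mathfrak{m}$; the middle map sends $\bar\alpha \mapsto [(\alpha)]$, well-defined because if $\alpha \equiv \alpha' \pmod{\mathfrak{m}}$ with both units at the primes dividing $\mathfrak{m}$, then $v_\mathfrak{p}(\alpha/\alpha' - 1) = v_\mathfrak{p}(\alpha - \alpha') \geq v_\mathfrak{p}(\mathfrak{m})$; the rightmost map is the canonical projection induced by $P_K(\mathfrak{m}) \subseteq P_K$.

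Exactness at the two ends is immediate from the definitions: injectivity of $\rho$ just restates the definition of $E_K(\mathfrak{m})$, while surjectivity of $Cl_K(\mathfrak{m}) \twoheadrightarrow Cl_K(1)$ follows from the existence of a representative of each ideal class coprime to $\mathfrak{m}$. Exactness at $(\mathcal{O}_K/\mathfrak{m})^{\times}$ is also quick: if $[(\alpha)] = 1$ in $Cl_K(\mathfrak{m})$ then $(\alpha) = (\gamma)$ for some $\gamma \equiv 1 \pmod{\mathfrak{m}}$, so $\alpha = u\gamma$ for a global unit $u \in E_K$ and $\bar\alpha = \bar u = \rho(u)$; conversely every $\rho(u)$ plainly maps to the trivial class.

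The only genuinely substantive step is exactness at $Cl_K(\mathfrak{m})$, which amounts to showing that the image of $(\mathcal{O}_K/\mathfrak{m})^{\times}$ equals the kernel $P_K/P_K(\mathfrak{m})$ of the right projection. Containment of image in kernel is clear, so the task is the reverse: given $(\beta) \in P_K$ with $\beta \in K^\times$ coprime to $\mathfrak{m}$, produce $\alpha \in \mathcal{O}_K$ coprime to $\mathfrak{m}$ with $(\alpha) \equiv (\beta) \pmod{P_K(\mathfrak{m})}$. I plan to invoke strong approximation to pick $\alpha \in \mathcal{O}_K$ with $v_\mathfrak{p}(\alpha - \beta) \geq v_\mathfrak{p}(\mathfrak{m})$ for every $\mathfrak{p} \mid \mathfrak{m}$ and $v_\mathfrak{q}(\alpha) \geq 0$ at all other finite primes. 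Since $v_\mathfrak{p}(\beta) = 0$ for $\mathfrak{p} \mid \mathfrak{m}$, this forces $v_\mathfrak{p}(\alpha/\beta - 1) \geq v_\mathfrak{p}(\mathfrak{m})$, so $(\alpha) = (\beta) \cdot (\alpha/\beta)$ with the second factor in $P_K(\mathfrak{m})$, realizing $[(\beta)]$ as the image of $\bar\alpha$.

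The main obstacle—really the only nontrivial ingredient—is this approximation argument replacing a principal fractional ideal by an integral generator in the correct residue class mod $\mathfrak{m}$; once that is granted, everything else is bookkeeping with the defining quotients of the two ray class groups.
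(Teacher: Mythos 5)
Your argument is correct, and it is essentially the standard proof of this exact sequence; the paper itself gives no proof, citing Cohen's \emph{Advanced Topics in Computational Number Theory}, Proposition 3.2.3, where the argument runs along the same lines as yours (explicit maps on the quotients $I_K(\mathfrak{m})/P_K(\mathfrak{m})$ and $I_K(\mathfrak{m})/P_K$, with approximation supplying the one nontrivial step). You correctly isolate that step --- replacing a principal fractional ideal coprime to $\mathfrak{m}$ by an integral generator in a prescribed residue class, via CRT/strong approximation for Dedekind domains --- and the remaining verifications of exactness are handled accurately.
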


If we assume $\mathfrak{m}=\mathfrak{q}_1 ^{n_{1}}\mathfrak{q}_2^{n_{2}}\cdots \mathfrak{q}_{k}^{n_k}$ where \(\mathfrak{q}_i\) is the prime ideal of \(K\), then 
\begin{equation}
\left( \mathcal{O}_{K}/\mathfrak{m} \right)^{\times} \cong \prod _{\mathfrak{q}|\mathfrak{m}}\left(\mathcal{O} _{K}/\mathfrak{q} ^{v _{\mathfrak{p}}(\mathfrak{m})}\right)^{\times}    
\end{equation}
Furthermore, we have 
\begin{equation}
    |Cl_{K}(\mathfrak{m})|= \frac{ |Cl_{K}(1)| \cdot \prod _{\mathfrak{q}|\mathfrak{m}}\left|\left( \mathcal{O}_{K}/\mathfrak{q}^{v _{\mathfrak{q}}(\mathfrak{m})} \right)^{\times}\right| }{|E _{K}:E _{K}(\mathfrak{m})|}.
\end{equation} 

Hence, if the class number of $K$ is known, the key to obtaining information about $K(\mathfrak{m})$ is to calculate the residue ring $\left( \mathcal{O}_{K}/\mathfrak{q} \right)^{\times}$ and the index of $E_{K}(\mathfrak{m})$ in $E_{K}$. 

Regarding the structure of the residue ring of powers of a prime ideal, Michele Elia, J. Carmelo Interlando, and Ryan Rosenbaum established the following theorems. Set \(\mathfrak{q}\) be a prime ideal of \(K\) lying above the rational odd prime \(q\) with ramification index \(e\) and inertia degree \(f\).

\begin{thm}[\cite{elia2010structure}, Theorems 5 and 6]\label{thm2.3}
If \(e = f = 1\), then the residue ring \(\mathcal{O}_K / \mathfrak{q}^n\) is isomorphic to \(\mathbb{Z} / q^n \mathbb{Z}\). If \(e = 1\) and \(f > 1\), \((\mathcal{O}_K / \mathfrak{q}^n)^\times\) is an abelian group of the form \(G_1(\mathfrak{q}^n) \times G_2(\mathfrak{q}^n)\), where \(G_1(\mathfrak{q}^n)\) is a cyclic group of order \(q^{f}-1\) and \(G_2(\mathfrak{q}^n)\) has order \(q^{f(n-1)}\).
\end{thm}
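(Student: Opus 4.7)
The plan is to reduce to a computation in the $\mathfrak{q}$-adic completion. Writing $\mathcal{O}_{\mathfrak{q}}$ for the valuation ring of $K_{\mathfrak{q}}$, the canonical map $\mathcal{O}_K/\mathfrak{q}^n \to \mathcal{O}_{\mathfrak{q}}/\mathfrak{q}^n \mathcal{O}_{\mathfrak{q}}$ is an isomorphism, so I can work entirely inside $\mathcal{O}_{\mathfrak{q}}$. The hypothesis $e = 1$ says $K_{\mathfrak{q}}/\mathbb{Q}_q$ is unramified, whence $\mathfrak{q}\mathcal{O}_{\mathfrak{q}} = q\mathcal{O}_{\mathfrak{q}}$ and the residue field is $\mathbb{F}_{q^f}$. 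When furthermore $f = 1$, the completion collapses to $\mathbb{Q}_q$, so $\mathcal{O}_{\mathfrak{q}} = \mathbb{Z}_q$ and hence $\mathcal{O}_K/\mathfrak{q}^n \cong \mathbb{Z}/q^n\mathbb{Z}$ at once.

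For the case $f > 1$, I would study the unit group via its filtration by principal units, through the reduction short exact sequence
\begin{equation*}
1 \longrightarrow U^{(1)} \longrightarrow (\mathcal{O}_K/\mathfrak{q}^n)^{\times} \longrightarrow (\mathcal{O}_K/\mathfrak{q})^{\times} \longrightarrow 1,
\end{equation*}
where $U^{(1)} := 1 + \mathfrak{q}(\mathcal{O}_K/\mathfrak{q}^n)$. The quotient is $\mathbb{F}_{q^f}^{\times}$, cyclic of order $q^f - 1$. Since $\gcd(q^f - 1, q) = 1$ and $\mathcal{O}_{\mathfrak{q}}$ is $\mathfrak{q}$-adically complete, Hensel's lemma lifts the $(q^f - 1)$-th roots of unity from the residue field to $\mathcal{O}_K/\mathfrak{q}^n$. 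This provides a multiplicative (Teichm\"{u}ller-type) section, so the sequence splits and the unit group decomposes as $G_1 \times G_2$ with $G_1$ cyclic of order $q^f - 1$ and $G_2 := U^{(1)}$. The order of $G_2$ is then determined by the elementary count $|\mathcal{O}_K/\mathfrak{q}^n| = q^{fn}$ and $|(\mathcal{O}_K/\mathfrak{q}^n)^{\times}| = q^{fn} - q^{f(n-1)} = q^{f(n-1)}(q^f - 1)$, yielding $|G_2| = q^{f(n-1)}$.

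The main obstacle is the splitting step: one has to invoke Hensel's lemma (or the Teichm\"{u}ller construction) in the complete local ring $\mathcal{O}_{\mathfrak{q}}$ and then push the decomposition down to the finite quotient $\mathcal{O}_K/\mathfrak{q}^n$. Everything else --- the passage to the completion, the identification of the residue field, and the cardinality count --- is routine, and the abelianness of $(\mathcal{O}_K/\mathfrak{q}^n)^{\times}$ is automatic since $\mathcal{O}_K/\mathfrak{q}^n$ is commutative.
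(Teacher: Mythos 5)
Your argument is correct, but note that the paper itself offers no proof of this statement: it is quoted verbatim from Elia--Interlando--Rosenbaum (\cite{elia2010structure}, Theorems 5 and 6) and used as a black box, so there is no in-text argument to compare against. Your route --- pass to the completion $\mathcal{O}_{\mathfrak{q}}$, observe that $e=1$ forces $\mathfrak{q}\mathcal{O}_{\mathfrak{q}}=q\mathcal{O}_{\mathfrak{q}}$ with residue field $\mathbb{F}_{q^f}$, dispose of the $f=1$ case via $\mathcal{O}_{\mathfrak{q}}=\mathbb{Z}_q$, and split the principal-unit exact sequence by a Teichm\"{u}ller lift --- is the standard one and every step goes through; the cardinality count $q^{fn}-q^{f(n-1)}=q^{f(n-1)}(q^f-1)$ correctly pins down $|G_2|$. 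One small simplification you could make: the splitting you obtain from Hensel's lemma is automatic here, since $(\mathcal{O}_K/\mathfrak{q}^n)^{\times}$ is a finite abelian group whose $q$-Sylow subgroup is exactly $U^{(1)}$ (a $q$-group of index $q^f-1$ prime to $q$), so the group is canonically the direct product of $U^{(1)}$ with its prime-to-$q$ part, and the latter is isomorphic to the cyclic quotient $\mathbb{F}_{q^f}^{\times}$. The Teichm\"{u}ller construction buys you an explicit section but is not logically needed for the statement as given, which only asserts the order of $G_2$ rather than its isomorphism type.
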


\begin{thm}[\cite{elia2011structure}, Theorem 2, Theorem 3, and Theorem 4]\label{thm2.4}
If \(e>1\) and \(q\) is an prime number, then \((\mathcal{O}_K / \mathfrak{q}^n)^{\times}\) is isomorphic to the direct product of two abelian groups \(G_1(\mathfrak{q}^n)\) and \(G_2(\mathfrak{q}^n)\), where \(G_1(\mathfrak{q}^n)\) is a cyclic group of order \(q^f - 1\) and \(G_2(\mathfrak{q}^n)\) is an abelian \(q\)-group with order $q^{f(n-1)}$.
\end{thm}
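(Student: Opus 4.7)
The plan is to reduce everything to the standard filtration of the unit group of a local Artinian ring by powers of its maximal ideal. First I would note that $\mathcal{O}_K/\mathfrak{q}^n$ is a finite commutative local ring with maximal ideal $\mathfrak{q}/\mathfrak{q}^n$ and residue field $\mathbb{F}_{q^f}$, so an element is a unit if and only if its reduction modulo $\mathfrak{q}$ is nonzero. Counting immediately gives $|(\mathcal{O}_K/\mathfrak{q}^n)^\times| = q^{f(n-1)}(q^f - 1)$, which matches the required order of $G_1 \times G_2$ and already shows the abelianness asserted in the theorem.

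Next, I would examine the reduction-mod-$\mathfrak{q}$ sequence
\[
1 \longrightarrow U_1 \longrightarrow (\mathcal{O}_K/\mathfrak{q}^n)^\times \longrightarrow \mathbb{F}_{q^f}^\times \longrightarrow 1,
\]
where $U_i := 1 + \mathfrak{q}^i/\mathfrak{q}^n$ for $1 \le i \le n$. The kernel $U_1$ has order $q^{f(n-1)}$ and the quotient has order $q^f - 1$; these are coprime, and since both groups are abelian the sequence splits, either by Schur--Zassenhaus or, more concretely, by Hensel-lifting a generator of $\mathbb{F}_{q^f}^\times$ to a Teichm\"uller-type $(q^f-1)$-th root of unity in $(\mathcal{O}_K/\mathfrak{q}^n)^\times$. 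The image of this splitting provides the cyclic subgroup $G_1$ of order $q^f - 1$, and we set $G_2 := U_1$ to recover the desired direct product decomposition.

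It then remains to verify that $G_2 = U_1$ is an abelian $q$-group of order $q^{f(n-1)}$. For each $i$ with $1 \le i < n$, the assignment $1 + x \mapsto x \pmod{\mathfrak{q}^{i+1}}$ identifies $U_i/U_{i+1}$ with the additive group of $\mathfrak{q}^i/\mathfrak{q}^{i+1}$, which is a one-dimensional $\mathbb{F}_{q^f}$-vector space and hence an elementary abelian $q$-group. The abelianness and the $q$-group property propagate up the filtration $U_1 \supseteq U_2 \supseteq \cdots \supseteq U_n = \{1\}$, and a telescoping count of the successive quotients pins down $|U_1| = q^{f(n-1)}$.

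The step I expect to be the main obstacle is the finer structure of $G_2$ in the ramified case $e > 1$: unlike the unramified setting, the logarithm series need not converge on all of $U_1$, so one cannot simply transport the $\mathcal{O}_K$-module structure from the additive side, and the precise cyclic decomposition of $G_2$ depends sensitively on $e$, $f$, and $n$. The statement above, however, only requires $G_2$ to be an abelian $q$-group of the prescribed order, a weakening which is already supplied by the filtration argument; the sharper decomposition in \cite{elia2011structure} would demand an additional case analysis that I would only undertake if a downstream application required it.
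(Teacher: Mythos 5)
The paper does not prove this statement at all: it is imported verbatim (in weakened form) from Elia--Interlando--Rosenbaum, so there is no internal proof to compare against. Judged on its own, your argument is correct and complete for the statement as quoted. The localness of $\mathcal{O}_K/\mathfrak{q}^n$, the unit count $q^{f(n-1)}(q^f-1)$, the exact sequence $1 \to U_1 \to (\mathcal{O}_K/\mathfrak{q}^n)^\times \to \mathbb{F}_{q^f}^\times \to 1$, the isomorphisms $U_i/U_{i+1} \cong \mathfrak{q}^i/\mathfrak{q}^{i+1} \cong \mathcal{O}_K/\mathfrak{q}$ (valid for any Dedekind domain, independently of $e$), and the coprimality splitting together give exactly $G_1 \times G_2$ with the asserted orders. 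Two small remarks. First, abelianness is not something the counting ``shows''; it is immediate from commutativity of the ring, and once you have an abelian group of order $q^{f(n-1)}(q^f-1)$ you do not need Schur--Zassenhaus or a Teichm\"uller lift: a finite abelian group is canonically the direct product of its primary components, so $G_2$ is the $q$-part (which your filtration identifies with $U_1$) and $G_1$ is the complementary part, cyclic because it injects into $\mathbb{F}_{q^f}^\times$. Second, your closing caveat is well placed: the cited Theorems 2--4 of the reference determine the isomorphism type of $G_2$ as a product of explicit cyclic factors depending on $e$, $f$, $n$, whereas the paper only ever uses the order of $G_2$ and the cyclicity of $G_1$ (in Lemma \ref{lem2.4} and Lemma \ref{lem 2.5}), so your weaker self-contained version suffices for everything downstream.
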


Let $\ell$ be a rational prime. If $G$ is an abelian group, the $\ell$-primary part of $G$ is denoted by $G(\ell)$, and the non-$\ell$-primary components are denoted by $G(\bar{\ell})$. Based on the above classical theorems, we have the following lemmas.

\begin{lem}\label{lem2.4}
Let $p$ be a rational prime number, and let $\mathfrak{q}_i$, for $i=1,\ldots,k$, be finite prime ideals of $K$ that are coprime to $p\mathcal{O}_{K}$ and \(2\mathcal{O}_K\). Let $\mathfrak{m} = \mathfrak{q}_1^{n_1} \cdots \mathfrak{q}_k^{n_k}$ for some $n_i \in \mathbb{N}^\times$. If $n \in \mathbb{N}^\times$ such that $p^n \Vert \left|Cl_K(\mathfrak{q}_1 \cdots \mathfrak{q}_k)\right|$, then $p^n \Vert \left|Cl_K(\mathfrak{m})\right|$.
\end{lem}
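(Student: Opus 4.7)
The plan is to apply formula $(2)$ of Lemma \ref{2.1} to both moduli $\mathfrak{m}$ and $\mathfrak{q}_1\cdots\mathfrak{q}_k$, take $p$-adic valuations, and cancel the common term coming from $|Cl_K(1)|$. After this cancellation the statement reduces to two independent claims: that $\operatorname{ord}_p\bigl(|(\mathcal{O}_K/\mathfrak{q}_i^{n_i})^\times|\bigr) = \operatorname{ord}_p\bigl(|(\mathcal{O}_K/\mathfrak{q}_i)^\times|\bigr)$ for each $i$, and that $\operatorname{ord}_p[E_K:E_K(\mathfrak{m})] = \operatorname{ord}_p[E_K:E_K(\mathfrak{q}_1\cdots\mathfrak{q}_k)]$.

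The first claim is immediate from Theorems \ref{thm2.3} and \ref{thm2.4}. The hypotheses that $\mathfrak{q}_i$ is coprime to $p\mathcal{O}_K$ and to $2\mathcal{O}_K$ force the rational prime $q_i$ below $\mathfrak{q}_i$ to be odd and distinct from $p$, so those theorems apply and yield $(\mathcal{O}_K/\mathfrak{q}_i^{n_i})^\times \cong G_1(\mathfrak{q}_i^{n_i}) \times G_2(\mathfrak{q}_i^{n_i})$, where $G_1$ is cyclic of order $q_i^{f_i}-1$ (independent of $n_i$) and $G_2$ is a $q_i$-group. Since $q_i \neq p$, only $G_1$ contributes to the $p$-part, which yields the desired equality.

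For the unit indices I would exploit the inclusion $E_K(\mathfrak{m}) \subset E_K(\mathfrak{q}_1\cdots\mathfrak{q}_k)$ and observe that reduction modulo $\mathfrak{m}$ (the map $\rho$ of Lemma \ref{2.1} restricted to $E_K(\mathfrak{q}_1\cdots\mathfrak{q}_k)$) embeds the quotient $E_K(\mathfrak{q}_1\cdots\mathfrak{q}_k)/E_K(\mathfrak{m})$ into the kernel of the natural surjection $(\mathcal{O}_K/\mathfrak{m})^\times \twoheadrightarrow (\mathcal{O}_K/\mathfrak{q}_1\cdots\mathfrak{q}_k)^\times$. Using the isomorphism $(1)$ and the structure theorems, this kernel equals $\prod_i G_2(\mathfrak{q}_i^{n_i})$, a product of $q_i$-groups with every $q_i \neq p$, so its order is coprime to $p$. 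Hence $\operatorname{ord}_p[E_K(\mathfrak{q}_1\cdots\mathfrak{q}_k):E_K(\mathfrak{m})] = 0$, and the tower identity $[E_K:E_K(\mathfrak{m})] = [E_K:E_K(\mathfrak{q}_1\cdots\mathfrak{q}_k)] \cdot [E_K(\mathfrak{q}_1\cdots\mathfrak{q}_k):E_K(\mathfrak{m})]$ delivers the second claim. I do not anticipate any real obstacle; the only step warranting care is the identification of the kernel as $\prod_i G_2(\mathfrak{q}_i^{n_i})$, which follows because the higher principal units on each factor constitute its $q_i$-Sylow subgroup, and this is exactly the content of the structure theorems above.
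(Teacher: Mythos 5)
Your proof is correct, and its overall skeleton matches the paper's: both reduce the statement, via formula (2), to showing that the $p$-adic valuations of $\bigl|(\mathcal{O}_K/\mathfrak{m})^\times\bigr|$ and of $[E_K:E_K(\mathfrak{m})]$ are unchanged when $\mathfrak{m}$ is replaced by $\mathfrak{q}_1\cdots\mathfrak{q}_k$, with the first claim read off from Theorems \ref{thm2.3} and \ref{thm2.4} in both cases. Where you diverge is the unit-index claim. The paper fixes a generator $u$ of $E_K$, splits its image $\rho(u)$ into $p$-primary and non-$p$-primary components, and chases a commutative diagram to show that the $p$-primary component of $\rho(u)$ has the same order modulo $\mathfrak{m}$ as modulo $\mathfrak{q}_1\cdots\mathfrak{q}_k$. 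You instead use the tower identity $[E_K:E_K(\mathfrak{m})]=[E_K:E_K(\mathfrak{q}_1\cdots\mathfrak{q}_k)]\cdot[E_K(\mathfrak{q}_1\cdots\mathfrak{q}_k):E_K(\mathfrak{m})]$ and observe that the second factor embeds into the kernel of $(\mathcal{O}_K/\mathfrak{m})^\times\twoheadrightarrow(\mathcal{O}_K/\mathfrak{q}_1\cdots\mathfrak{q}_k)^\times$, a product of $q_i$-groups of order prime to $p$. Your variant is slightly more robust: it makes no use of $E_K$ being generated by a single element (the paper's phrasing ``fix a generator of $E_K$'' implicitly leans on the imaginary quadratic setting, even though the lemma is stated for a general number field $K$), and it sidesteps the decomposition of $\rho(u)$ entirely. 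One small remark: you do not actually need to identify the kernel with $\prod_i G_2(\mathfrak{q}_i^{n_i})$ as you suggest; it suffices that its order is $\prod_i\bigl|(\mathcal{O}_K/\mathfrak{q}_i^{n_i})^\times\bigr|/\bigl|(\mathcal{O}_K/\mathfrak{q}_i)^\times\bigr|$, a product of powers of the $q_i$, each prime to $p$.
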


\begin{proof}
Let $(u_{i})_{i}$ be the generators of $E_{K}$ as a $\mathbb{Z}$-module. The index $[E_{K}:E_{K}(\mathfrak{m})]$ is the product of the orders of $\rho(u_{i})$, where $\rho(u_{i})$ is the image of $u_{i}$ in $(\mathcal{O}_{K}/\mathfrak{m})^{\times}$.

Fix a generator of $E_{K}$, denoted by $u$. Let $\rho(u)=g_{1}(\mathfrak{m})\times g_{2}(\mathfrak{m})$, where $g_{1}(\mathfrak{m})\in (\mathcal{O}_{K}/\mathfrak{m})^{\times}(p)$ and $g_{2}(\mathfrak{m})\in (\mathcal{O}_{K}/\mathfrak{m})^{\times}(\bar{p})$. The natural surjective map $(\mathcal{O}_{K}/\mathfrak{q}_{i}^{n_i})^{\times} \to (\mathcal{O}_{K}/\mathfrak{q}_{i})^{\times}$ induces an isomorphism $(\mathcal{O}_{K}/\mathfrak{q}_{i}^{n_i})^{\times}(p) \to (\mathcal{O}_{K}/\mathfrak{q}_{i})^{\times}(p)$ by Theorem \ref{thm2.3} and Theorem \ref{thm2.4}. Moreover, we have the following commutative diagram.

\begin{figure}[ht]

\begin{tikzcd}[scale cd=0.6]
	{(\mathcal{O}_K/\mathfrak{m})^\times(p)} && {(\mathcal{O}_K/\mathfrak{q}_1^{n_1})^\times(p)\times \cdots \times (\mathcal{O}_K/\mathfrak{q}_k^{n_k})^\times(p)} \\
	\\
	{(\mathcal{O}_K/\mathfrak{q}_1 \cdots \mathfrak{q}_k)^\times(p)} && {(\mathcal{O}_K/\mathfrak{q}_1)^\times(p)\times \cdots \times (\mathcal{O}_K/\mathfrak{q}_k)^\times(p)} \\
	{x\bmod \mathfrak{m}} && {(x\bmod \mathfrak{q}_1^{n_1},...,x\bmod \mathfrak{q}_k^{n_k})} \\
	\\
	{x\bmod \mathfrak{q}_1\cdots\mathfrak{q}_k} && {(x\bmod \mathfrak{q}_1,...,x\bmod \mathfrak{q}_k)}
	\arrow["\cong", from=1-1, to=1-3]
	\arrow[two heads, from=1-1, to=3-1]
	\arrow["\cong", from=1-3, to=3-3]
	\arrow["\cong", from=3-1, to=3-3]
	\arrow[maps to, from=4-1, to=4-3]
	\arrow[maps to, from=4-1, to=6-1]
	\arrow[maps to, from=4-3, to=6-3]
	\arrow[maps to, from=6-1, to=6-3]
\end{tikzcd}
\vspace{10pt}
\captionof{figure}{The commutative diagram for Lemma \ref{lem2.4}}
\end{figure}

Thus, the order of $g_{1}(\mathfrak{m})$ equals the order of $g_{1}(\mathfrak{q}_1 \cdots \mathfrak{q}_k)$, the image of $u$ in the $p$-primary component of $(\mathcal{O}_K / \mathfrak{q}_1 \cdots \mathfrak{q}_k)^{\times}$. Consequently, if $p^{n'} \Vert [E_{K}:E_{K}(\mathfrak{m})]$, then $p^{n'} \Vert [E_{K}:E_{K}(\mathfrak{q}_1\cdots \mathfrak{q}_k)]$. Moreover, $p^{n''} \Vert \left|(\mathcal{O}_{K}/\mathfrak{m})^{\times}\right|$ if and only if $p^{n''} \Vert \left|(\mathcal{O}_{K}/(\mathfrak{q}_1 \cdots \mathfrak{q}_k))^{\times}\right|$. The results follow from equation (2).
\end{proof}

\begin{lem}\label{lem 2.5}
Let $q$ and $p$ be two distinct odd prime numbers, and let $K$ be an imaginary quadratic field (excluding $\mathbb{Q}(\zeta_3)$ when $p=3$). Let $\mathfrak{q}$ be a prime ideal of $K$ lying above $q$.If $p$ divides $N(\mathfrak{q})-1$ and $K$ has trivial $p$-class group, then for any $n$, $Cl_{K}(\mathfrak{q}^{n})$ contains a  unique subgroup isomorphic to $\mathbb{Z} /p \mathbb{Z}$.
\end{lem}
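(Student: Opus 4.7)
The plan is to take the $p$-primary part of the exact sequence in Lemma \ref{2.1} with modulus $\mathfrak{m}=\mathfrak{q}^n$,
$$1 \to E_K/E_K(\mathfrak{q}^n) \to (\mathcal{O}_K/\mathfrak{q}^n)^\times \to Cl_K(\mathfrak{q}^n) \to Cl_K(1) \to 1,$$
and argue that three of the four terms have trivial or controlled $p$-parts, which will force $Cl_K(\mathfrak{q}^n)(p)$ to inherit the (cyclic) structure of $(\mathcal{O}_K/\mathfrak{q}^n)^\times(p)$. Since all groups in sight are finite abelian, the functor $(-)(p)$ is exact, so from such a vanishing we obtain an isomorphism between the two middle $p$-parts.

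Next I would dispose of the outer terms. By hypothesis $Cl_K(1)(p)=0$, so the rightmost term contributes nothing. For the leftmost term, $K$ is imaginary quadratic with $E_K\subseteq\{\pm 1,\pm i,\text{sixth roots of }1\}$; since $p$ is odd, and since $K=\mathbb{Q}(\zeta_3)$ is excluded when $p=3$, the group $E_K$ has order prime to $p$. Consequently $E_K/E_K(\mathfrak{q}^n)$ has trivial $p$-part as well, so after taking $(-)(p)$ the sequence collapses to
$$(\mathcal{O}_K/\mathfrak{q}^n)^\times(p) \;\xrightarrow{\;\cong\;}\; Cl_K(\mathfrak{q}^n)(p).$$

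Now I would compute $(\mathcal{O}_K/\mathfrak{q}^n)^\times(p)$ using Theorems \ref{thm2.3} and \ref{thm2.4}. In each of the possible splitting types of $q$ in $K$ (split, inert, ramified), that group decomposes as $G_1(\mathfrak{q}^n)\times G_2(\mathfrak{q}^n)$ where $G_1$ is cyclic of order $q^f-1=N(\mathfrak{q})-1$ and $G_2$ is a $q$-group; when $e=f=1$ the whole unit group is cyclic of order $q^{n-1}(q-1)$, which fits the same pattern. Since $p\neq q$, the $p$-part of $G_2$ vanishes, so $(\mathcal{O}_K/\mathfrak{q}^n)^\times(p)$ equals the $p$-Sylow subgroup of the cyclic group $G_1$; in particular it is cyclic. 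The hypothesis $p\mid N(\mathfrak{q})-1$ ensures this $p$-Sylow is nontrivial.

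Combining the two preceding steps, $Cl_K(\mathfrak{q}^n)(p)$ is a nontrivial cyclic $p$-group, and therefore contains a unique subgroup of order $p$, which is automatically the unique subgroup of $Cl_K(\mathfrak{q}^n)$ isomorphic to $\mathbb{Z}/p\mathbb{Z}$. I expect the only subtle step to be confirming that the $p$-part of $E_K/E_K(\mathfrak{q}^n)$ really vanishes; this is immediate once one lists the possible $E_K$ for imaginary quadratic $K$ and uses the hypotheses $p$ odd and $K\neq\mathbb{Q}(\zeta_3)$ if $p=3$, so the argument should go through without further technical obstacles.
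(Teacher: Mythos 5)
Your proposal is correct and follows essentially the same route as the paper: take the $p$-primary part of the exact sequence of Lemma \ref{2.1}, observe that $E_K$ and $Cl_K(1)$ have trivial $p$-part (using $p$ odd, $K\neq\mathbb{Q}(\zeta_3)$ when $p=3$, and the trivial $p$-class group hypothesis), and identify $Cl_K(\mathfrak{q}^n)(p)$ with the nontrivial cyclic $p$-Sylow of $G_1(\mathfrak{q}^n)$ via Theorems \ref{thm2.3} and \ref{thm2.4}. Your write-up is in fact more careful than the paper's, which compresses all of this into two sentences.
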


\begin{proof}
we have following exact sequence
$$ \mathcal{O} _{K}^{\times}(p)\to (\mathcal{O}_{K} /\mathfrak{q}^n)^{\times}(p)\to Cl_K(\mathfrak{q}^n)(p)\to 1.$$
     This is immediate, as the $p$-th unity of roots in $K$ equals 1, and $G_{1}(\mathfrak{q} ^{n})$ is a cyclic group of order $N(\mathfrak{q})-1$. 
\end{proof}

According to class field theory, we have the following corollary, which will be commonly used in the next section.

\begin{cor}\label{2.6}
If the above conditions on $K$, $p$, and $q$ are satisfied, then $K$ has a unique $\mathbb{Z}/p\mathbb{Z}$-extension contained within $K(\mathfrak{q}^n)$ for every $n$. This extension is denoted by $M(\mathfrak{q},p)$. 
\end{cor}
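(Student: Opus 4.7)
The plan is to deduce this corollary from Lemma \ref{lem 2.5} by translating its statement across the Artin reciprocity map. First, I would invoke global class field theory to obtain the canonical isomorphism $\mathrm{Gal}(K(\mathfrak{q}^n)/K) \cong Cl_K(\mathfrak{q}^n)$. Under the Galois correspondence, $\mathbb{Z}/p\mathbb{Z}$-subextensions $L/K$ of $K(\mathfrak{q}^n)/K$ are in bijection with the subgroups $H \leq Cl_K(\mathfrak{q}^n)$ of index $p$ (any such $H$ is automatically normal, the ambient group being abelian), so the task reduces to counting such $H$.

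I would then use the standard fact that in a finite abelian group $A$ the number of subgroups of index $p$ equals the number of subgroups of order $p$; both quantities equal $(p^{r}-1)/(p-1)$, where $r = \dim_{\mathbb{F}_p}(A[p])$ is the $p$-rank of $A$. Lemma \ref{lem 2.5} supplies a unique subgroup of order $p$ in $Cl_K(\mathfrak{q}^n)$, which forces $r=1$ and hence the existence of exactly one $\mathbb{Z}/p\mathbb{Z}$-subextension of $K(\mathfrak{q}^n)/K$.

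Finally, to justify that a single notation $M(\mathfrak{q},p)$ suffices (independent of $n$), I would observe that $\mathfrak{q}^{n} \mid \mathfrak{q}^{n+1}$ gives the tower $K(\mathfrak{q}) \subseteq K(\mathfrak{q}^2) \subseteq \cdots$, so the unique $\mathbb{Z}/p\mathbb{Z}$-extension sitting inside $K(\mathfrak{q})$ is automatically contained in every $K(\mathfrak{q}^n)$, and by the uniqueness established above it must coincide with the unique such extension at each level. There is no genuine obstacle here; the substantive content---that the $p$-primary part of $Cl_K(\mathfrak{q}^n)$ is cyclic---already sits in Lemma \ref{lem 2.5}, and the corollary is a direct formal translation via class field theory.
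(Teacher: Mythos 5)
Your proposal is correct and follows exactly the route the paper intends: the paper offers no explicit proof beyond the phrase ``according to class field theory,'' and your argument simply fills in the standard details (Artin reciprocity, the equality of the number of index-$p$ and order-$p$ subgroups of a finite abelian group, and the tower $K(\mathfrak{q})\subseteq K(\mathfrak{q}^2)\subseteq\cdots$ to make $M(\mathfrak{q},p)$ independent of $n$). Nothing is missing.
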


\begin{rk}\label{rk 2.7}
    When \(K\) is an imaginary quadratic field with trivial \(2\)-class group, \(p=2\), and \(q\) is an odd rational prime number, if \(4\mid N(\mathfrak{q})-1\), then there exists a number field \(M(\mathfrak{q},2)\), which is the unique \(\mathbb{Z}/2\mathbb{Z}\)-extension of \(K\) contained in the ray class field of \(K(\mathfrak{q}^n)\) for any \(n\ge 1\).
\end{rk}

\section{\texorpdfstring{The finitude and powerfulness of \(G_S(K,p)\)}{The finitude and powerfulness of G\_S(K)}}

In this section, we use Corollary \ref{2.6} to establish the finiteness of \( G_{S}(K,p) \) and the powerfulness of \( G_{S}(K,p) \) when \( p \) is an odd prime. 

More specifically, we first consider the case where \( p \) is an odd prime. Later, we discuss two cases when \( p = 2 \): when \( S \) contains two finite primes and when \( S \) contains exactly one finite prime. We first introduce the following essential concepts of pro-$p$ groups.

\begin{definition}
Let \( G \) be a pro-\( p \) group. We say that \( G \) is powerful if \( G/\overline{G^p} \) is an abelian group when \( p \) is odd, or if \( G/\overline{G^4} \) is an abelian group when \( p = 2 \).
\end{definition}

\begin{ex}
A pro-\(p\) group \(G\) is a Demushkin group if the generator rank \(d(G) < \infty\), the relation rank \(r(G) = 1\), and the cup product 
$$ 
H^1\left(G, \mathbb{F}_p\right) \times H^1\left(G, \mathbb{F}_p\right) \rightarrow H^2\left(G, \mathbb{F}_p\right) 
$$ is a non-degenerate bilinear form. In particular, if \(d(G) = 2\) and \(p\) is odd, then the Demushkin pro-\(p\) groups \(G\) is powerful.
\end{ex}
\begin{proof}
    This can be established by \cite[Theorem 3.9.11]{neukirch2013cohomology} and \cite[Theorem 3.9.19]{neukirch2013cohomology}.
\end{proof}

From the definitions of powerful and generator rank, the following lemma is immediate.

\begin{lem}\label{3.3}
   Let \(G\) be a pro-\(p\) group and \(H\) a closed normal subgroup of \(G\). Then, we have 
\(d(G/H) \le d(G)\) and \(H\) is a powerful pro-\(p\) group if \(G\) is.
\end{lem}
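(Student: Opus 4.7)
The plan is to treat the two claims of Lemma \ref{3.3} separately, since they rely on different tools.

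For the inequality $d(G/H)\le d(G)$, I would use the Frattini-quotient characterization $d(G)=\dim_{\mathbb{F}_p} G/\Phi(G)$, where $\Phi(G)=\overline{G^p[G,G]}$. The quotient map $\pi\colon G\twoheadrightarrow G/H$ carries $G^p$ into $(G/H)^p$ and $[G,G]$ into $[G/H,G/H]$, so $\pi(\Phi(G))\subseteq\Phi(G/H)$. This induces a surjection $G/\Phi(G)\twoheadrightarrow(G/H)/\Phi(G/H)$, and comparing $\mathbb{F}_p$-dimensions gives the bound. Equivalently, the image of any minimal topological generating set of $G$ topologically generates $G/H$.

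For the powerfulness of $H$ itself, I take $p$ odd (the case $p=2$ being analogous with $\overline{G^p}$ replaced by $\overline{G^4}$). Powerfulness of $G$ reads $[G,G]\subseteq\overline{G^p}$, and I need $[H,H]\subseteq\overline{H^p}$. Since $H$ is closed in the pro-$p$ group $G$, it is itself a pro-$p$ group, so $\overline{H^p}$ makes sense. The easy inclusion $[H,H]\subseteq[G,G]\cap H\subseteq\overline{G^p}\cap H$ reduces the assertion to proving $\overline{G^p}\cap H\subseteq\overline{H^p}$, which is where the normality hypothesis on $H$ must be invoked.

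The hard part will be exactly this last reduction: a priori, an element $h\in H$ expressed as a limit of $p$-th powers $g_n^p$ with $g_n\in G$ need not be a limit of $p$-th powers of elements of $H$. My approach would be to combine the conjugation action of $G$ on $H$, which stabilizes $\overline{H^p}$, with the identity $[G,G]\subseteq\overline{G^p}$: writing each $g_n$ as $h_n z_n$ against a coset transversal of $H$ in $G$ and expanding $g_n^p$ via the Hall--Petrescu collection formula, one rewrites the product as an element of $\overline{H^p}$ multiplied by commutators, which by powerfulness of $G$ already lie in $\overline{G^p}$, and iterating modulo deeper terms of the lower $p$-central series should push the error into $\overline{H^p}$. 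The delicate point is to control these commutator corrections so that they actually land in $\overline{H^p}$ rather than merely in $\overline{G^p}\cap H$; this is the step where normality of $H$ (rather than mere closedness) is essential, and it is the main obstacle of the proof.
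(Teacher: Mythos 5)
Your treatment of the first assertion is fine: the Frattini-quotient computation $d(G)=\dim_{\mathbb{F}_p}G/\Phi(G)$ together with $\pi(\Phi(G))\subseteq\Phi(G/H)$ is exactly the ``immediate'' argument intended here --- the paper offers no written proof of Lemma \ref{3.3} at all, asserting only that it follows from the definitions.

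For the second assertion, however, you have correctly sensed that the claim about the subgroup $H$ is not immediate, and your sketch stalls at precisely the step that cannot be repaired: the inclusion $\overline{G^p}\cap H\subseteq\overline{H^p}$ is not merely delicate, it is false, so no Hall--Petrescu collection argument will close the gap. A concrete counterexample for odd $p$: let $G=\{I+pN\}$ with $N$ ranging over the strictly upper triangular $3\times 3$ matrices over $\mathbb{Z}_p$. This $G$ is uniform, hence powerful, since $[G,G]\subseteq\{I+p^2cE_{13}:c\in\mathbb{Z}_p\}\subseteq\overline{G^p}$ (note $(I+pE_{13})^p=I+p^2E_{13}$). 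The closed subgroup $H=\{I+paE_{12}+pbE_{23}+p^2cE_{13}:a,b,c\in\mathbb{Z}_p\}$ is normal in $G$ because $[G,H]\subseteq[G,G]\subseteq H$, and it contains the commutator of $I+pE_{12}$ and $I+pE_{23}$, which equals $I+p^2E_{13}$ exactly; but every $p$-th power of an element of $H$ has $E_{13}$-coefficient divisible by $p^3$, so $\overline{H^p}$ does not contain $I+p^2E_{13}$ and $H$ is not powerful. Thus the second clause of Lemma \ref{3.3}, read literally, is wrong, and your proposal could not have been completed. What the paper actually uses (in the proof of Lemma \ref{lem 3.5}) is the statement for the \emph{quotient}: if $G$ is powerful then so is $G/H$. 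That really is immediate, since $[G/H,G/H]$ is the image of $[G,G]\subseteq\overline{G^p}$ and this image lands in $\overline{(G/H)^p}$; this is the version you should prove, and presumably the version the lemma intends, with ``$H$'' a slip for ``$G/H$''.
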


Depending on the uniform version of the Tame Fontaine-Mazur conjecture, Lee and Lim in \cite{lee2024finitude} proved the following result, which can be used to determine the finitude of $G_S(K,p)$.

\begin{pro} \cite{lee2024finitude} \label{3.4}
Let $F$ be a number field and $S$ a finite set of non-p-adic places of $F$. Let $\mathcal{F}$ be a pro-p extension of $F$ which is unramified outside $S$. If the Galois group $H=\operatorname{Gal}(\mathcal{F} / F)$ is powerful with $d(H) \leq 2$, then $H$ is finite.
\end{pro}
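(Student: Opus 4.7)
The plan is to reduce to the uniform version of the Tame Fontaine--Mazur conjecture by passing from $H$ to a uniform open subgroup. Assume for contradiction that $H$ is infinite. Since $H$ is a finitely generated powerful pro-$p$ group, the structure theory of powerful pro-$p$ groups (e.g.\ Dixon--du Sautoy--Mann--Segal, \emph{Analytic Pro-$p$ Groups}) shows that $H$ is $p$-adic analytic and that for $k$ sufficiently large the characteristic open subgroup $U := \overline{H^{p^{k}}}$ is uniform. By Lemma \ref{3.3}, $U$ is itself powerful with $d(U) \le d(H) \le 2$, and $U$ is infinite because it has finite index in the infinite group $H$.

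Next, I would let $F' := \mathcal{F}^{U}$ be the fixed field of $U$ in $\mathcal{F}$. Then $F'/F$ is a finite Galois subextension of $\mathcal{F}/F$ and $\mathcal{F}/F'$ is a pro-$p$ Galois extension with group $U$. Because $\mathcal{F}/F$ is unramified outside $S$, the extension $\mathcal{F}/F'$ is unramified outside the finite set $S'$ of places of $F'$ lying above $S$; in particular $S'$ still consists of non-$p$-adic places. Hence $U$ is a continuous quotient of $G_{S'}(F',p)$.

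Now I would invoke the uniform version of the Tame Fontaine--Mazur conjecture, which asserts that for a number field $F'$ and a finite set $S'$ of non-$p$-adic places, $G_{S'}(F',p)$ admits no infinite uniform pro-$p$ quotient. Applied to the pair $(F',S')$, this contradicts the existence of the infinite uniform quotient $U$, forcing $H$ to be finite.

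The principal obstacle is the uniform Tame Fontaine--Mazur statement itself, which supplies the essential arithmetic content; everything else is a mechanical reduction combining the structure theory of powerful pro-$p$ groups with the functorial behaviour of ramification under finite extensions. The hypothesis $d(H) \le 2$ enters precisely at this point: it keeps the rank of the extracted uniform subgroup $U$ within the range for which the uniform Fontaine--Mazur statement is formulated (and available in the literature), and one might alternatively hope to bypass it by observing that for $p$-adic analytic $H$ of dimension at most two the Lie algebra is solvable, forcing $H^{\mathrm{ab}}$ to be infinite and clashing with the tame-case finiteness of $G_S(F,p)^{\mathrm{ab}}$; however the cleaner route, and the one matching \cite{lee2024finitude}, is the Fontaine--Mazur reduction above.
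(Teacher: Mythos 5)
Your reduction --- pass to a uniform open subgroup $U=\overline{H^{p^k}}$ of the infinite powerful group $H$, replace $F$ by the fixed field $F'=\mathcal{F}^U$ with the tame set $S'$ above $S$, and invoke the uniform Tame Fontaine--Mazur statement --- is exactly the route the paper attributes to Lee--Lim (the paper itself offers no argument beyond citing \cite{lee2024finitude}, Proposition 2.9, and explicitly says the result depends on the uniform version of the Tame Fontaine--Mazur conjecture). Your proposal is correct and takes essentially the same approach, and your closing remark about the solvability of the dimension-$\le 2$ Lie algebra is precisely the reason the needed case is available.
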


\begin{proof}
    This result is established in \cite{lee2024finitude}, Proposition 2.9.
\end{proof}

\begin{lem}\label{lem 3.5}
Let \(K\) be a number field, \(p\) an odd rational prime number and the set \(S\) of non-\(p\)-adic places of \(K\). Let \(M\) be the fixed subfield of \(K_S\) fixed by the Frattini subgroup of \(G_S(K,p)\). If there exists a finite place \(\mathfrak{q} \in S\) such that \(\mathfrak{q}\) does not split in $M$, then \(G_S(K,p)\) is finite and powerful.   
\end{lem}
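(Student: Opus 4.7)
The plan is to verify the hypotheses of Proposition \ref{3.4} for $G := G_S(K,p)$, namely that $G$ is powerful and satisfies $d(G) \leq 2$. The core idea is that the non-splitting hypothesis forces $G$ to coincide with its own decomposition group at $\mathfrak{q}$, which in turn is a quotient of the tame pro-$p$ local Galois group at $\mathfrak{q}$, a group whose structure immediately delivers both desired properties.

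First, fix a prime $\mathfrak{Q}$ of $K_S$ lying above $\mathfrak{q}$ and let $D_\mathfrak{q} \subseteq G$ denote the corresponding decomposition group. Since $M$ is the fixed field of the Frattini subgroup $\Phi(G)$, one has $\mathrm{Gal}(M/K) = G/\Phi(G)$, and the hypothesis that $\mathfrak{q}$ does not split in $M$ is exactly the statement that the image of $D_\mathfrak{q}$ in $G/\Phi(G)$ is surjective. Because $\Phi(G)$ is contained in every maximal open subgroup of $G$, this surjectivity modulo the Frattini subgroup upgrades to $D_\mathfrak{q} = G$ by the Burnside basis theorem for pro-$p$ groups.

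Next, identify $D_\mathfrak{q}$ with $\mathrm{Gal}((K_S)_\mathfrak{Q}/K_\mathfrak{q})$, so that $G = D_\mathfrak{q}$ is a quotient of the maximal pro-$p$ quotient of the absolute Galois group of the completion $K_\mathfrak{q}$. Because $\mathfrak{q}$ is non-$p$-adic, every pro-$p$ extension of $K_\mathfrak{q}$ is tame, so this maximal pro-$p$ Galois group admits the well-known presentation
\[
\langle \sigma, \tau \mid \sigma \tau \sigma^{-1} = \tau^{N(\mathfrak{q})} \rangle
\]
with $\sigma$ a Frobenius lift and $\tau$ a topological generator of tame inertia. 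As $G$ is a quotient, this immediately forces $d(G) \leq 2$. For powerfulness I split on the residue of $N(\mathfrak{q})$ modulo $p$. If $p \nmid N(\mathfrak{q}) - 1$, then multiplication by $N(\mathfrak{q})$ is a non-$p$-power automorphism of any nontrivial cyclic $p$-group (it lies outside the Sylow-$p$ subgroup of $(\mathbb{Z}/p^r\mathbb{Z})^{\times}$), so $\tau$ is killed in every $p$-group quotient and the above group collapses to $\mathbb{Z}_p$; hence $G$ is cyclic pro-$p$, automatically abelian and powerful. If $p \mid N(\mathfrak{q}) - 1$, write $N(\mathfrak{q}) - 1 = p^k m$ with $k \geq 1$ and $p \nmid m$; the defining relation rearranges to $[\sigma,\tau] = \tau^{p^k m} \in \overline{G^p}$, and since $\sigma, \tau$ topologically generate $G$, this gives $[G,G] \subseteq \overline{G^p}$, i.e., $G$ is powerful.

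Proposition \ref{3.4} then delivers finiteness. The main obstacle, conceptually, is the first step: correctly packaging the arithmetic non-splitting condition as the topological equality $D_\mathfrak{q} = G$ via the Frattini argument, since one must be careful to control the full decomposition group (not merely the inertia subgroup or an unspecified conjugate). Once this translation is in place, both the rank bound and powerfulness follow mechanically from the tame local presentation and elementary facts about automorphisms of cyclic $p$-groups.
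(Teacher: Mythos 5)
Your proof is correct and follows essentially the same route as the paper: Burnside's basis theorem upgrades the non-splitting hypothesis to $D_{\mathfrak{Q}} = G_S(K,p)$, the group is then realized as a quotient of the maximal pro-$p$ quotient of the local Galois group at $\mathfrak{q}$, and Proposition \ref{3.4} finishes. The only difference is that where the paper cites the fact that this local group is a rank-$2$ Demushkin group (hence powerful), you verify powerfulness directly from the tame presentation $\left\langle \sigma,\tau \mid \sigma\tau\sigma^{-1}=\tau^{N(\mathfrak{q})} \right\rangle$, which has the minor advantage of also covering the degenerate case $p\nmid N(\mathfrak{q})-1$, where the local pro-$p$ group is merely procyclic rather than Demushkin of rank $2$.
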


\begin{proof}
Since \(\mathfrak{q}\) is not split in \(M\), the decomposition subgroup \(D_\mathfrak{q}\) of \(\operatorname{Gal}(M/K)\) at \(\mathfrak{q}\) equals the entire Galois group. By Burnside's basis theorem, for a place $\mathfrak{Q}$ of $K_{S}$ above $\mathfrak{q}$, the decomposition subgroup of \(G_S(K,p)\) at $\mathfrak{Q}$ equal $G_{S}(K,p)$. 


\begin{figure}[ht]
\[\begin{tikzcd}
	{K_S} \\
	\\
	M \\
	K
	\arrow["{D_{\mathfrak{Q}}}"', curve={height=18pt}, dashed, no head, from=1-1, to=4-1]
	\arrow[no head, from=3-1, to=1-1]
	\arrow[no head, from=4-1, to=3-1]
	\arrow["{D_\mathfrak{q}}"', curve={height=12pt}, dashed, no head, from=4-1, to=3-1]
\end{tikzcd}\]
\vspace{10pt}
\captionof{figure}{The field diagram for Lemma \ref{lem 3.5}}
\end{figure}
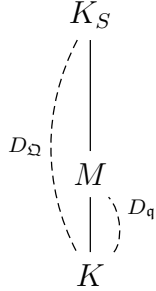

Furthermore, $G_{S}(K,p)$ is isomorphic to a quotient of the Galois group $\operatorname{Gal}(\overline{K_{\mathfrak{q}}}/K_{\mathfrak{q}})$, where $\overline{K_{\mathfrak{q}}}$ is the maximal pro-$p$ extension of the completion $K_{\mathfrak{q}}$ of $K$ at $\mathfrak{q}$. According to [\cite{koch2002galois}, Chapter 10], $\operatorname{Gal}(\overline{K_{\mathfrak{q}}}/K_{\mathfrak{q}})$ is a pro-$p$ Demushkin group of generator rank 2. Since the Demushkin groups of generator rank 2 are powerful, $G_S(K,p)$ is powerful and generator rank less or equal 2 by Lemma \ref{3.3}. The results follows from the proposition \ref{3.4}.    
\end{proof}
 
By the above lemma, for finding some cases of finite \(K_S\), a key point is to find a “ramified” prime ideal whose decomposition subgroup covers the full Galois group $\operatorname{Gal}(M/K)$. To do this, we need a clearer understanding of the $p$-divisibility of the ray class number of number fields and generator ranks. Regarding the \(p\)-divisibility of the ray class number, we have outlined several properties in the preceding section.

Regarding the generator rank, there has an important characterization in \cite{koch2002galois}, which can also be found in Theorem 10.7.10 of \cite{neukirch2013cohomology}. We define:
\[
\delta(K) = \begin{cases} 
1 & \text{if } \zeta_p \subseteq K \\
0 & \text{if } \zeta_p \nsubseteq K,
\end{cases} 
\quad \text{and} \quad 
\delta(K_{\mathfrak{p}}) = \begin{cases} 
1 & \text{if } \zeta_p \subseteq K_{\mathfrak{q}}, \\
0 & \text{if } \zeta_p \nsubseteq K_{\mathfrak{q}},
\end{cases}
\]
where \(\mathfrak{q}\) is a prime ideal of \(K\) and \(K_{\mathfrak{q}}\) is the completion of \(K\) at \(\mathfrak{q}\). Let
\[
V_S = \left\{ \alpha \in K^{\times} \mid \alpha \mathcal{O}_K = \mathfrak{a}^p \text{ and } \alpha \in K_{\mathfrak{p}}^p \text{ for all } \mathfrak{p} \in S \right\}.
\]
And the group \(\Be_S\) is the Pontryagin dual of \(V_S/K^{\times p}\).

\begin{pro}[\cite{koch2002galois}, Theorem 11.8]\label{2.5}
    Let \(K\) be a number field, \(S\) a finite set of non-\(p\)-adic places of \(K\), and \(r\) the number of infinite places of \(K\). Then
\[ d(G_S(K,p)) = 1 - r - \delta(K) + \sum_{\mathfrak{q} \in S} \delta(K_{\mathfrak{q}}) + \dim_{\mathbb{F}_p} \Be_S. \]
\end{pro}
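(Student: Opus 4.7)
\emph{Proof proposal.} The plan is to compute $d(G_S(K,p))$ by identifying it with $\dim_{\mathbb{F}_p}H^1(G_S(K,p),\mathbb{F}_p)$ via Burnside's basis theorem, and then evaluating this cohomological dimension through Kummer theory combined with Poitou--Tate duality. Since any continuous homomorphism $G_S(K,p)\to\mathbb{F}_p$ factors through the maximal pro-$p$ quotient, this also equals $\dim_{\mathbb{F}_p} H^1(G_{\tilde S}, \mathbb{F}_p)$, where $G_{\tilde S}$ denotes the Galois group of the maximal (not necessarily pro-$p$) algebraic extension of $K$ unramified outside $S$.

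The heart of the argument is Kummer theory over $K'=K(\mu_p)$, which reduces to $K$ when $\delta(K)=1$. The maximal elementary abelian $p$-subextension of $G_{\tilde S}(K')$ is a Kummer extension $K'(\sqrt[p]{W})$, where the Kummer subgroup $W\subset K'^\times/K'^{\times p}$ is cut out by exactly the conditions defining $V_S$; hence $\dim_{\mathbb{F}_p}W=\dim_{\mathbb{F}_p}\Be_S$. When $\delta(K)=0$, one descends from $K'$ to $K$ via the $\omega^{-1}$-isotypic component under $\mathrm{Gal}(K'/K)$, a group of order prime to $p$, which preserves $\mathbb{F}_p$-dimensions.

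The remaining terms $1-r-\delta(K)+\sum_{\mathfrak{q}\in S}\delta(K_\mathfrak{q})$ arise as boundary corrections in the Poitou--Tate nine-term exact sequence for $\mu_p$ over $G_{\tilde S}$. The $-\delta(K)$ records the global $H^0(K,\mu_p)$; the $1-r$ packages the contribution of the archimedean places together with Dirichlet's unit theorem ($\dim_{\mathbb{F}_p}E_K/E_K^p=r-1+\delta(K)$, of which the $\delta(K)$ piece is already absorbed above); and each $\mathfrak{q}\in S$ contributes $\delta(K_\mathfrak{q})$, because the local Kummer group $K_\mathfrak{q}^\times/K_\mathfrak{q}^{\times p}$ has $\mathbb{F}_p$-dimension $1+\delta(K_\mathfrak{q})$ for $\mathfrak{q}\nmid p$, one of whose dimensions is already controlled by the conditions defining $V_S$ and the remaining $\delta(K_\mathfrak{q})$ is the new local character coming from $p$-th roots of unity in $K_\mathfrak{q}$.

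The main obstacle is the precise bookkeeping of the Poitou--Tate sequence: separating global Kummer contributions from local boundary corrections without double-counting, and verifying that the descent step when $\delta(K)=0$ faithfully preserves all relevant dimensions when extracting the $\omega^{-1}$-component. A complete derivation is carried out in Theorem 11.8 of Koch \cite{koch2002galois} (see also Theorem 10.7.10 of \cite{neukirch2013cohomology}), and we simply follow that argument.
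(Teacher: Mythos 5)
The paper offers no proof of this proposition: it is quoted directly from Koch (Theorem 11.8; see also Theorem 10.7.10 of Neukirch--Schmidt--Wingberg), so your decision to defer to that reference is exactly what the authors do, and your outline --- Burnside's basis theorem to reduce $d(G_S(K,p))$ to $\dim_{\mathbb{F}_p}H^1(G_S(K,p),\mathbb{F}_p)$, then Kummer theory over $K(\mu_p)$ and the Poitou--Tate nine-term sequence, with descent through the prime-to-$p$ group $\operatorname{Gal}(K(\mu_p)/K)$ when $\delta(K)=0$ --- is the standard route. One step in your sketch is misstated, however: $V_S$ is \emph{not} the Kummer radical of the maximal elementary abelian $p$-extension unramified outside $S$. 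Its defining conditions ($\alpha\mathcal{O}_K=\mathfrak{a}^p$, and $\alpha$ a local $p$-th power at each $\mathfrak{q}\in S$) describe cyclic degree-$p$ extensions that are unramified \emph{everywhere} and split at $S$, not extensions ramified only inside $S$. Indeed, if $W$ were literally $V_S/K^{\times p}$ and simultaneously computed the maximal elementary abelian quotient, the formula would collapse to $d=\dim_{\mathbb{F}_p}\Be_S$ with no further terms, contradicting the very boundary corrections you then introduce. In the actual derivation $\Be_S=(V_S/K^{\times p})^{\vee}$ enters by duality, as the defect (a Shafarevich--Tate-type kernel) of the localization map in the Poitou--Tate sequence, while $1-r-\delta(K)+\sum_{\mathfrak{q}\in S}\delta(K_{\mathfrak{q}})$ comes from the Euler-characteristic bookkeeping of the local terms and $\dim_{\mathbb{F}_p}E_K/E_K^p=r-1+\delta(K)$. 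Since you ultimately follow Koch's argument anyway, this does not invalidate the proposal, but the role assigned to $V_S$ would have to be corrected in any self-contained write-up.
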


\subsection{The case of odd \texorpdfstring{$p$}{p}}

\begin{thm}\label{finite of two q}
    Let $K$ be an imaginary quadratic field with a trivial $p$-class group, where $p$ is an odd rational prime satisfying $p \nmid |\mu _{K}|$. For any non $p$-adic prime ideal $\mathfrak{q}_1$ of $\mathcal{O}_K$, there exist infinite many prime ideals $\mathfrak{q}_2$ of $\mathcal{O}_{K}$ such that $G_S(K)$ is finite powerful $p$-group, where $S = \{\mathfrak{q}_1, \mathfrak{q}_2\}$.  
\end{thm}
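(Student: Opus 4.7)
The plan is to invoke Lemma \ref{lem 3.5}, which produces both the finiteness and the powerfulness of $G_S(K,p)$ from a single local condition: it suffices to exhibit some $\mathfrak{q} \in S$ that does not split in $M$, the fixed field of the Frattini subgroup of $G_S(K,p)$. Thus the whole task reduces to constructing, for each given $\mathfrak{q}_1$, infinitely many $\mathfrak{q}_2$ for which this non-splitting condition is visible.

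To describe $M$ explicitly I would combine class field theory with the machinery of Section 2. The group $\operatorname{Gal}(M/K)$ is the maximal elementary abelian $p$-quotient of $G_S(K,p)^{\text{ab}} \cong Cl_K(\mathfrak{q}_1\mathfrak{q}_2)(p)$. The triviality of the $p$-class group of $K$, together with Lemma \ref{2.1} and the fact that $p \nmid |\mu_K|$ (so the image of the units contributes no $p$-torsion to the quotient), reduces this to the $p$-primary component of $(\mathcal{O}_K/\mathfrak{q}_1)^\times \times (\mathcal{O}_K/\mathfrak{q}_2)^\times$. By Theorem \ref{thm2.3}, each factor contributes a single $\mathbb{Z}/p\mathbb{Z}$ precisely when $p \mid N(\mathfrak{q}_i)-1$, so by Corollary \ref{2.6}, $M$ is the compositum of those $M(\mathfrak{q}_i,p)$ which are defined.

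The construction of $\mathfrak{q}_2$ then splits according to $\mathfrak{q}_1$. If $p \mid N(\mathfrak{q}_1)-1$, I pick $\mathfrak{q}_2$ with $p \nmid N(\mathfrak{q}_2)-1$, i.e.\ whose Frobenius in $K(\zeta_p)/K$ is nontrivial; Chebotarev supplies infinitely many such $\mathfrak{q}_2$ outside $\{\mathfrak{q}_1\}$ and the finitely many places above $p$. Then $M = M(\mathfrak{q}_1, p)$ has degree $p$ over $K$, and since $M(\mathfrak{q}_1,p)/K$ must be ramified at $\mathfrak{q}_1$ (otherwise it would lie in the trivial $p$-Hilbert class field), it is totally ramified there, so $D_{\mathfrak{q}_1} = \operatorname{Gal}(M/K)$. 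If instead $p \nmid N(\mathfrak{q}_1)-1$, I pick $\mathfrak{q}_2$ with $p \mid N(\mathfrak{q}_2)-1$; Chebotarev again produces infinitely many, and now $M = M(\mathfrak{q}_2, p)$ is totally ramified at $\mathfrak{q}_2$, giving $D_{\mathfrak{q}_2} = \operatorname{Gal}(M/K)$.

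In either case, applying Lemma \ref{lem 3.5} to $S = \{\mathfrak{q}_1, \mathfrak{q}_2\}$ with the distinguished prime just produced concludes that $G_S(K,p)$ is finite and powerful. The main obstacle is Step 2 --- the identification of $M$ as the compositum of the $M(\mathfrak{q}_i,p)$'s --- since it requires combining the class-field-theoretic description of $\operatorname{Gal}(M/K)$ as a quotient of a ray class group with the explicit arithmetic input that $\dim_{\mathbb{F}_p} Cl_K(\mathfrak{q}_1\mathfrak{q}_2)(p)$ equals the number of $i$ with $p \mid N(\mathfrak{q}_i)-1$. Once this is in place, total ramification at the chosen prime supplies the non-splitting condition essentially for free, and Lemma \ref{lem 3.5} does the rest.
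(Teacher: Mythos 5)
Your proposal is correct for the literal statement, and it shares the paper's overall skeleton (describe $M$ via the ray class group $Cl_K(\mathfrak{q}_1\mathfrak{q}_2)(p)$, then feed a non-split prime into Lemma \ref{lem 3.5}), but the actual construction of $\mathfrak{q}_2$ is genuinely different and, in a sense, degenerate. You deliberately choose $\mathfrak{q}_2$ so that exactly one of the two primes satisfies $p \mid N(\mathfrak{q}_i)-1$; then only one $M(\mathfrak{q}_i,p)$ exists, $\operatorname{Gal}(M/K)\cong\mathbb{Z}/p\mathbb{Z}$, and the inactive prime cannot ramify in any tame pro-$p$ extension, so $G_S(K,p)$ collapses to the one-prime group $G_{\{\mathfrak{q}_i\}}(K,p)$, which is procyclic (hence finite, abelian, powerful). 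The paper instead takes \emph{both} norms $\equiv 1 \pmod p$ and uses Chebotarev in $M(\mathfrak{q}_1,p)(\zeta_p)/K$ to choose $\mathfrak{q}_2$ totally split over $\mathbb{Q}$, congruent to $1$ mod $p$ in norm, and \emph{inert} in $M(\mathfrak{q}_1,p)$; then $M = M(\mathfrak{q}_1,p)\cdot M(\mathfrak{q}_2,p)$ with $\operatorname{Gal}(M/K)\cong(\mathbb{Z}/p\mathbb{Z})^2$, and the non-splitting of $\mathfrak{q}_2$ in $M$ comes from combining total ramification in $M(\mathfrak{q}_2,p)$ (inertia covers one factor) with inertness in $M(\mathfrak{q}_1,p)$ (the Frobenius covers the other). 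What the paper's harder choice buys is the generator-rank-$2$, genuinely nonabelian situation that is reused verbatim in the proof of Theorem \ref{thm 1.1}; your construction proves the existence claim but produces only cyclic examples and would not support that later application. One small caveat on your identification of $M$: Theorem \ref{thm2.3} as stated assumes the residue characteristic is odd, but since you only use the modulus $\mathfrak{q}_1\mathfrak{q}_2$ to the first power, the cyclicity of $(\mathcal{O}_K/\mathfrak{q}_i)^\times$ is elementary and the step stands.
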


\begin{proof}
    
Without loss of generality, we assume that $N(\mathfrak{q} _{1})\equiv 1 \bmod p$. By Corollary \ref{2.6}, the ray class field modulo $\mathfrak{q}_1$ of $K$ contains exactly one $\mathbb{Z}/p\mathbb{Z}$ subextension, denoted \(M(\mathfrak{q}_1,p)\). We consider the field extension diagrams as follows:   

\begin{figure}[ht]
\begin{tikzcd}[scale cd=0.7]
	&{M(\mathfrak{q}_1,p)(\zeta_p)} \\
	{K(\zeta_p)} & & {M(\mathfrak{q}_1},p) \\
	& K
	\arrow[no head, from=2-1, to=1-2]
	\arrow[no head, from=2-3, to=1-2]
	\arrow[no head, from=3-2, to=2-1]
	\arrow[no head, from=3-2, to=2-3]
\end{tikzcd}
\vspace{10pt}
\captionof{figure}{The field diagram for Theorem \ref{finite of two q}}
\end{figure}

We choose a prime ideal $\mathfrak{q}_2'$ of $\mathcal{O}_{K(\zeta_p)}$ that is inert in $M(\mathfrak{q}_1,p)(\zeta_p)$ and is totally split over \(\mathbb{Q}\). Moreover, we assume that $\mathfrak{q}_2 = \mathfrak{q}_2' \cap \mathcal{O}_K$ is unramified in $K(\zeta_p)$ and that the prime number below $\mathfrak{q}_2$ does not divide the order of the group of roots of unity $\mu_K$. By the Chebotarev density theorem, there are infinitely many prime ideals of $\mathcal{O}_{K(\zeta_p)}$ that satisfy these conditions. By our choice of $\mathfrak{q}_2$, we have $N(\mathfrak{q}_2) \equiv 1 \pmod{p}$, and $\mathfrak{q}_2$ is inert in $M(\mathfrak{q}_1,p)$. Similarly, there is only one sub-extension $M(\mathfrak{q}_2,p)$ of the ray class field modulo $\mathfrak{q}_2$, and its Galois group over \(K\) is isomorphic to $\mathbb{Z}/p\mathbb{Z}$ by Corollary \ref{2.6}.  

Now, let \(S=\{\mathfrak{q}_1,\mathfrak{q}_2\}\) and $M$ be the fixed field by the Frattini subgroup of $G_{S}(K,p)$. Clearly, we have \(M(\mathfrak{q}_1,p) \cdot M(\mathfrak{q}_2,p) \subset M.\) On the other hand, by Proposition \ref{2.5}, we have
\begin{gather*}
 d(G_{S}(K,p))=1- r+\delta(K_{\mathfrak{q}_{1}})+\delta(K_{\mathfrak{q}_2})-\delta(K)+\dim_{\mathbb{F}_{p}}\Be_S(K)\\
 \le 1-r +\delta(K_{\mathfrak{q}_1})+\delta(K_{\mathfrak{q}_2})-\delta(K) \\
 +\dim_{\mathbb{F}_p}Cl_K(p
) + \dim_{\mathbb{F}_p}\mathcal{O}_K^{\times} /p
 \le 2.   
\end{gather*}
It follows that the generator rank of $G_{S}(K,p)$ equals 2. 
\begin{figure}[ht]
\begin{tikzcd}[scale cd=0.6]
	& {K_S} \\
	\\
	& M \\
	{M(\mathfrak{q}_2,p)} && {M(\mathfrak{q}_1,p)} \\
	& K
	\arrow[no head, from=3-2, to=1-2]
	\arrow[no head, from=4-1, to=3-2]
	\arrow[no head, from=4-3, to=3-2]
	\arrow[no head, from=5-2, to=4-1]
	\arrow[no head, from=5-2, to=4-3]
\end{tikzcd}
\vspace{10pt}
\captionof{figure}{The field diagram for Theorem \ref{finite of two q}}
\end{figure}

Moreover, by the choice of \(\mathfrak{q}_2\), the Galois group $\operatorname{Gal}(M /K)$ is isomorphic to the decomposition subgroup of $\mathfrak{q}_2$. The results follow from Lemma \ref{lem 3.5}. 

\end{proof}

\subsection{The case of even \texorpdfstring{$p$}{p}}

Furthermore, if we consider only the finiteness of \(G_S(K,p)\) when \(p=2\), we obtain the following results:

\begin{thm}\label{finite s=2 p=2}
Let $K$ be an imaginary quadratic field with trivial 2-class group. Let $\mathfrak{q}_1$ and $\mathfrak{q}_2$ be two distinct prime ideals of $K$ not lying above 2. Suppose $\mathfrak{q}_1$ satisfies $4 \mid N(\mathfrak{q}_1) - 1$. If $\mathfrak{q}_2$ is inert in $M(\mathfrak{q}_1,2)$ and $4 \mid N(\mathfrak{q}_2) - 1$, then the Galois group $G_S(K,2)$ is a finite 2-group, where $S = \{\mathfrak{q}_1, \mathfrak{q}_2\}$.
\end{thm}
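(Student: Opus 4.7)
The plan is to mirror the strategy of Theorem~\ref{finite of two q}: produce a biquadratic subextension of $K_S/K$ that exhausts the Frattini quotient of $G_S(K,2)$, exhibit a prime of $S$ whose decomposition group covers the whole Galois group, and conclude via Proposition~\ref{3.4} after verifying powerfulness. The only structural change is that Corollary~\ref{2.6} is replaced by Remark~\ref{rk 2.7}, which produces the two quadratic extensions $M(\mathfrak{q}_i,2) \subset K(\mathfrak{q}_i^n)$ for $i=1,2$.

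The fields $M(\mathfrak{q}_1,2)$ and $M(\mathfrak{q}_2,2)$ are distinct: each $M(\mathfrak{q}_i,2)/K$ must be ramified somewhere (as $Cl_K(2)$ is trivial) and hence only at $\mathfrak{q}_i$. Their compositum $L$ is therefore biquadratic over $K$ and lies in $K_S$. Applying Proposition~\ref{2.5} with $r=1$, $\delta(K)=\delta(K_{\mathfrak{q}_i})=1$ (since $-1\in K\subset K_{\mathfrak{q}_i}$), together with the estimate
\[
\dim_{\mathbb{F}_2}\Be_S \;\le\; \dim_{\mathbb{F}_2} Cl_K(2)+\dim_{\mathbb{F}_2}\mathcal{O}_K^{\times}/(\mathcal{O}_K^{\times})^2 \;=\; 0+1,
\]
gives $d(G_S(K,2))\le 2$. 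Combined with the biquadratic $L\subseteq K_S$, this forces $d(G_S(K,2))=2$ and identifies $L$ with the fixed field $M$ of the Frattini subgroup. A short ramification/Frobenius calculation then shows $\mathfrak{q}_2$ is non-split in $M/K$: it is totally ramified in $M(\mathfrak{q}_2,2)/K$ and unramified in $M(\mathfrak{q}_1,2)/K$, giving $e_{\mathfrak{q}_2}(M/K)=2$; and it is inert in $M(\mathfrak{q}_1,2)/K$ by hypothesis, giving $f_{\mathfrak{q}_2}(M/K)=2$. Hence $ef=4=[M:K]$, so the decomposition subgroup of $\mathfrak{q}_2$ in $\operatorname{Gal}(M/K)$ is the whole group.

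By Burnside's basis theorem, the decomposition subgroup of $G_S(K,2)$ at any place of $K_S$ above $\mathfrak{q}_2$ is all of $G_S(K,2)$, so $G_S(K,2)$ is a quotient of the tame local pro-$2$ Galois group at $\mathfrak{q}_2$, which is the pro-$2$ completion of $\langle \sigma,\tau\mid \sigma\tau\sigma^{-1}=\tau^{N(\mathfrak{q}_2)}\rangle$ with commutator $[\sigma,\tau]=\tau^{N(\mathfrak{q}_2)-1}$. The principal subtlety relative to the odd case, and what I expect to be the main obstacle, is the $p=2$ definition of powerfulness via $G/\overline{G^4}$ rather than $G/\overline{G^p}$: the relation $\tau^{N(\mathfrak{q}_2)-1}\in\overline{G^4}$ holds precisely when $4\mid N(\mathfrak{q}_2)-1$, which is exactly the strengthened hypothesis. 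Granted this, the local group is a powerful pro-$2$ Demushkin group of rank two; Lemma~\ref{3.3} then transfers powerfulness and the rank bound $d\le 2$ to $G_S(K,2)$, and Proposition~\ref{3.4} yields finiteness. The condition $4\mid N(\mathfrak{q}_1)-1$ enters only to guarantee the existence of $M(\mathfrak{q}_1,2)$ via Remark~\ref{rk 2.7}.
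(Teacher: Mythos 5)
Your proposal is correct in its substance, but it departs from the paper's proof at the decisive finiteness step, so it is worth comparing the two routes. Up to the identification of the Frattini fixed field $M$ with the biquadratic compositum $M(\mathfrak{q}_1,2)\cdot M(\mathfrak{q}_2,2)$ and the conclusion that a prime $\mathfrak{Q}$ of $K_S$ above $\mathfrak{q}_2$ has full decomposition group, the two arguments agree. From that point the paper does \emph{not} invoke powerfulness or Proposition \ref{3.4} at all: it writes $G_S(K,2)=D_{\mathfrak{Q}}$ as an extension of the procyclic quotient $D_{\mathfrak{Q}}/I_{\mathfrak{Q}}$ (generated by Frobenius) by the procyclic tame inertia $I_{\mathfrak{Q}}$, so the fixed field $F$ of $I_{\mathfrak{Q}}$ is a finite abelian extension of $K$ (it sits inside $K_S^{\mathrm{ab}}$, which is finite in the tame case), and $K_S/F$ is a finite abelian extension of $F$ (it sits inside $F_{S'}^{\mathrm{ab}}$); two applications of the finiteness of tame ray class groups finish the proof. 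You instead compute that the local tame pro-$2$ group at $\mathfrak{q}_2$ is powerful exactly because $\tau^{N(\mathfrak{q}_2)-1}\in\overline{G^4}$ when $4\mid N(\mathfrak{q}_2)-1$, pass powerfulness and $d\le 2$ to the quotient $G_S(K,2)$, and quote Proposition \ref{3.4}. That computation is right, and it correctly pinpoints why the $p=2$ notion of powerfulness ($G/\overline{G^4}$ abelian) needs the strengthened congruence; note, though, that $4\mid N(\mathfrak{q}_2)-1$ is already required just for $M(\mathfrak{q}_2,2)$ to exist via Remark \ref{rk 2.7} (the image of $-1$ in $(\mathcal{O}_K/\mathfrak{q}_2)^{\times}$ absorbs one factor of $2$), so both proofs use it there as well. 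What your version buys is uniformity with the odd-$p$ argument of Lemma \ref{lem 3.5} and the extra conclusion that $G_S(K,2)$ is powerful, which the paper does not assert for $p=2$. What the paper's version buys is independence from Proposition \ref{3.4} at $p=2$: the paper states Lemma \ref{lem 3.5} only for odd $p$ and frames Proposition \ref{3.4} as resting on Lee--Lim's analysis in the Fontaine--Mazur circle of ideas, so the authors evidently preferred an elementary two-layer ray-class-field argument in the even case. Your proof is therefore a valid alternative provided the cited Proposition 2.9 of Lee--Lim indeed covers $p=2$; if that is in doubt, the paper's metacyclic argument is the safer route.
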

\begin{proof}
As proof of the above theorem, the fixed field of the Frattini subgroup of $G_{S}(K,2)$, denoted by $M$, is the compositum of $M(\mathfrak{q}_1,2)$ and $M(\mathfrak{q}_2,2)$. The existence of $M(\mathfrak{q}_2,2)$ follows from Remark \ref{rk 2.7}. Thus, the Galois group $G_{S}(K,2)$ is equal to the decomposition subgroup of $G_{S}(K,2)$ at the prime $\mathfrak{Q}$ lying above $\mathfrak{q}_2$.

Let $I_{\mathfrak{Q}}$ denote the inertia subgroup of $\mathfrak{Q}$ and the fixed field $F$ is a cyclic extension of $K$ because the inertia subgroup of a tamely ramified extension is cyclic. Indeed, $F$ is a finite extension of $K$ since $D_{\mathfrak{Q}}/I_{\mathfrak{Q}}$ is a cyclic group, implying that $F$ is a subfield of $K_{S}^{\text{ab}}$. Furthermore, $K_{S}$ is a finite extension of $F$, as $\operatorname{Gal}(F_{S'}^{\operatorname{ab}}/K_S) \subseteq G_{S'}(F,2)^{\operatorname{ab}}$, where $S'$ consists of all primes lying above $\mathfrak{q}_1$ or $\mathfrak{q}_2$. Consequently, $K_{S}$ is a finite extension of $K$.
\end{proof}

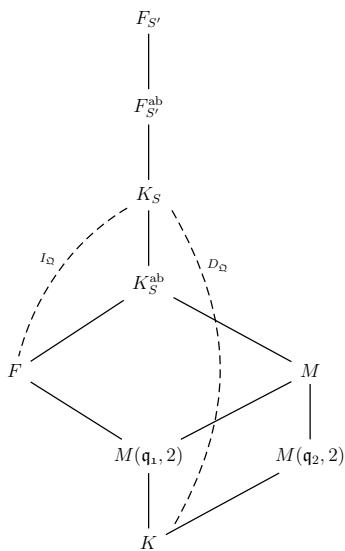
\begin{figure}[ht]
\begin{tikzcd}[scale cd=0.6]
	& {F_{S'}} \\
	& {F_{S'}^{\text{ab}}} \\
	& {K_S} \\
	& {K_S^{\text{ab}}} \\
	F && M \\
	& {M(\mathfrak{q_1},2)} & {M(\mathfrak{q}_2,2)} \\
	& K
	\arrow[no head, from=1-2, to=2-2]
	\arrow[no head, from=2-2, to=3-2]
	\arrow[no head, from=3-2, to=4-2]
	\arrow["{I_{\mathfrak{Q}}}"', curve={height=12pt}, dashed, no head, from=3-2, to=5-1]
	\arrow["{D_{\mathfrak{Q}}}"{pos=0.2}, shift left=2, curve={height=-24pt}, dashed, no head, from=3-2, to=7-2]
	\arrow[no head, from=5-1, to=4-2]
	\arrow[no head, from=5-3, to=4-2]
	\arrow[no head, from=5-3, to=6-3]
	\arrow[no head, from=6-2, to=5-1]
	\arrow[no head, from=6-2, to=5-3]
	\arrow[no head, from=7-2, to=6-2]
	\arrow[no head, from=7-2, to=6-3]
\end{tikzcd}
\vspace{10pt}
\captionof{figure}{The field diagram for Theorem 3.8.}
\end{figure}

\begin{thm}\label{finite s=1 p=2}
    
Let $K$ be an imaginary quadratic field with a non-trivial cyclic 2-class group. Let $\mathfrak{q}$ be a prime ideal of $K$ that is not split in the 2-class tower of $K$ and does not lie above 2. Then, $G_{S}(K,2)$ is finite where $S=\{\mathfrak{q}\}$.
\end{thm}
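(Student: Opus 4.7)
The plan is to run the same three-step argument used in Lemma \ref{lem 3.5} and Theorem \ref{finite s=2 p=2}: bound the generator rank of $G := G_S(K,2)$ by $2$, force the decomposition group at a prime above $\mathfrak{q}$ to exhaust $G$, and then invoke tameness. For the rank bound I would apply Proposition \ref{2.5} with $p=2$: since $K$ is imaginary quadratic $r=1$, and $-1 \in K \subseteq K_\mathfrak{q}$ gives $\delta(K)=\delta(K_\mathfrak{q})=1$. The term $\dim_{\mathbb{F}_2}\Be_S$ is bounded by $\dim_{\mathbb{F}_2}Cl_K(2) + \dim_{\mathbb{F}_2}\mathcal{O}_K^\times/(\mathcal{O}_K^\times)^2$; the first summand equals $1$ by the cyclicity hypothesis, and the second equals $1$ since $-1$ is not a square in the unit group of an imaginary quadratic field. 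This yields $d(G) \le 2$.

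The heart of the argument is to show that $\mathfrak{q}$ does not split in $M$, the fixed field of the Frattini subgroup $\Phi(G)$. By construction $M/K$ is elementary abelian of $2$-power degree and unramified outside $\mathfrak{q}$, so every quadratic subextension $F'/K$ of $M/K$ is either everywhere unramified (in which case $F' \subseteq H_2(K)$) or ramified at $\mathfrak{q}$. In the ramified case $\mathfrak{q}$ trivially does not split in $F'$. In the unramified case, the cyclicity of $Cl_K(2)$ forces $F'$ to coincide with the unique quadratic subextension $E$ of $H_2(K)$; and by hypothesis $\mathfrak{q}$ does not split in the $2$-class tower, so a fortiori it does not split in $E$. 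Hence $\mathfrak{q}$ is non-split in every index-$2$ quotient of $\operatorname{Gal}(M/K)$, which in an elementary abelian $2$-group forces the decomposition subgroup to be the whole group. Burnside's basis theorem, applied as in Lemma \ref{lem 3.5}, then yields $D_\mathfrak{Q} = G$ for a place $\mathfrak{Q}$ of $K_S$ above $\mathfrak{q}$.

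I would conclude with the tame-inertia argument from Theorem \ref{finite s=2 p=2}. Since $\mathfrak{q}$ is non-$2$-adic, the inertia subgroup $I_\mathfrak{Q}$ is procyclic; being normal in $D_\mathfrak{Q} = G$, its fixed field $F := K_S^{I_\mathfrak{Q}}$ is Galois over $K$, everywhere unramified (at $\mathfrak{q}$ by construction and elsewhere because $K_S$ is), so $F \subseteq H_2(K)$ and $[F:K] < \infty$. The extension $K_S/F$ has cyclic, hence abelian, Galois group $I_\mathfrak{Q}$, so $K_S \subseteq F_{S'}^{\mathrm{ab}}$ where $S'$ denotes the places of $F$ above $\mathfrak{q}$; by tame class field theory $\operatorname{Gal}(F_{S'}^{\mathrm{ab}}/F)$ is a quotient of a finite ray class group of $F$, whence $[K_S:F]$ and therefore $|G|$ is finite. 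The only genuinely delicate step is the identification of $M$ in the middle paragraph: one must really use both the cyclicity of $Cl_K(2)$ (to make the unramified quadratic subextension unique) and the splitting hypothesis on $\mathfrak{q}$ to rule out every quadratic subextension in which $\mathfrak{q}$ splits; the tame class-field bookkeeping that follows is routine.
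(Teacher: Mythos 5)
Your proposal is correct and follows essentially the same route as the paper's proof: the rank bound $d(G_S(K,2))\le 2$ via Proposition \ref{2.5}, the non-splitting of $\mathfrak{q}$ in the Frattini quotient field $M$ (combining the unique unramified quadratic subextension, handled by the hypothesis on the class tower, with the ramified ones), Burnside's basis theorem to get $D_{\mathfrak{Q}}=G$, and the tame cyclic-inertia argument from Theorem \ref{finite s=2 p=2} to conclude finiteness. You merely spell out the steps the paper leaves implicit (in particular the case analysis of quadratic subextensions of $M$ and the final ray-class-group bookkeeping), so there is no substantive difference.
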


\begin{proof}
By the Proposition \ref{2.5}, it is straightforward to determine that the generator rank of $G_{S}(K,2)$ is at most 2. Without loss of generality, assume $d(G_{S}(K,2)) = 2$. In this scenario, $4 \mid (N(\mathfrak{q}) - 1)$, and there exists exactly one $\mathbb{Z}/2\mathbb{Z}$-extension $M(\mathfrak{q},2)$ of $K$, which is totally ramified in $\mathfrak{q}$.

Let $M$ be the field fixed by the Frattini subgroup of $G_{S}(K,2)$, and let $H_2(K)$ be the unique subfield of the 2-Hilbert class field of $K$ contained in $M$. Then $\mathfrak{q}$ does not split in $M$. By Burnside's basis theorem, $G_{S}(K,2)$ is the decomposition subgroup of $\mathfrak{Q}$, a prime of $K_{S}$ lying over $\mathfrak{q}$. Therefore, $K_{S}$ is a finite extension of $K$.
\end{proof}

\section{\texorpdfstring{The structure of \(G_S(K,p)\)}{The structure of G\_S(K,p)}}

For the structure of the Galois group \(G_S(K,p)\), even if we know that $G_{S}(K,p)$ is finite in certain cases, its structure is often difficult to determine. For a general number field $K$, it is not always clear whether $K_{S}$ is an abelian extension of $K$. However, we have the following two helpful facts:

Let \(G\) be a profinite group. The second homology group \(H_2(G, \mathbb{Z})\), with \(\mathbb{Z}\) acting trivially, is called the Schur multiplier of \(G\), denoted \(M(G)\). The following theorem, established by Schur in 1904, is a well-known result about the Schur multiplier.

\begin{thm}[\cite{karpilovsky1987schur}, Proposition 2.1.1 and Proposition 2.5.14]
If \(G\) is a cyclic finite group, then \(M(G) = 1\). Furthermore, if \[G \cong \mathbb{Z}/n_1\mathbb{Z} \oplus \mathbb{Z}/n_2\mathbb{Z} \oplus \ldots \oplus \mathbb{Z}/n_k\mathbb{Z},\] where \(n_{i+1} \mid n_i\) for all \(i \in \{1, 2, \ldots, k-1\}\) and \(k \geq 2\), then 
$$
M(G) \cong \mathbb{Z}/n_2\mathbb{Z} \oplus (\mathbb{Z}/n_3\mathbb{Z})^2 \oplus \ldots \oplus (\mathbb{Z}/n_k\mathbb{Z})^{k-1}.
$$
\end{thm}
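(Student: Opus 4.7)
The plan is to split the statement into two pieces: the vanishing for cyclic groups, and the direct-sum formula, which I would reduce to the cyclic case by induction on $k$ using the Künneth formula for group homology.

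For the cyclic case, I would exhibit the classical periodic free resolution of the trivial module over $\mathbb{Z}[C_n]$, namely
$$\cdots \to \mathbb{Z}[C_n] \xrightarrow{T-1} \mathbb{Z}[C_n] \xrightarrow{N} \mathbb{Z}[C_n] \xrightarrow{T-1} \mathbb{Z}[C_n] \to \mathbb{Z} \to 0,$$
where $T$ is a generator of $C_n$ and $N = 1 + T + \cdots + T^{n-1}$ is the norm. Tensoring over $\mathbb{Z}[C_n]$ with the trivial module $\mathbb{Z}$ collapses $T-1$ to $0$ and $N$ to multiplication by $n$, so $H_2(C_n,\mathbb{Z})$ sits between a zero map and a zero map with image equal to the kernel of multiplication by $n$ on $\mathbb{Z}$, which is zero. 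Hence $M(C_n) = 0$.

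For the inductive step, write $G = A \oplus B$ with $A = \mathbb{Z}/n_1\mathbb{Z}$ and $B = \mathbb{Z}/n_2\mathbb{Z} \oplus \cdots \oplus \mathbb{Z}/n_k\mathbb{Z}$. The Künneth formula for group homology gives, in degree two,
$$H_2(G,\mathbb{Z}) \cong H_2(A,\mathbb{Z}) \oplus H_2(B,\mathbb{Z}) \oplus \bigl(H_1(A,\mathbb{Z}) \otimes H_1(B,\mathbb{Z})\bigr),$$
since the Tor contributions involving $H_0 = \mathbb{Z}$ vanish. Using $M(A) = 0$ from the cyclic case and $H_1(\,\cdot\,,\mathbb{Z}) = (\,\cdot\,)^{\mathrm{ab}}$, this simplifies to $M(G) \cong M(B) \oplus (A \otimes_\mathbb{Z} B)$. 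The divisibility hypothesis $n_i \mid n_1$ for $i \ge 2$ gives $\mathbb{Z}/n_1 \otimes \mathbb{Z}/n_i \cong \mathbb{Z}/\gcd(n_1,n_i) \cong \mathbb{Z}/n_i$, so
$$A \otimes B \cong \mathbb{Z}/n_2\mathbb{Z} \oplus \mathbb{Z}/n_3\mathbb{Z} \oplus \cdots \oplus \mathbb{Z}/n_k\mathbb{Z}.$$
By induction applied to $B$ (which has $k-1$ summands, and still satisfies the divisibility chain), $M(B) \cong \mathbb{Z}/n_3 \oplus (\mathbb{Z}/n_4)^2 \oplus \cdots \oplus (\mathbb{Z}/n_k)^{k-2}$. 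Adding the two contributions and regrouping by index yields the claimed decomposition $\mathbb{Z}/n_2 \oplus (\mathbb{Z}/n_3)^2 \oplus \cdots \oplus (\mathbb{Z}/n_k)^{k-1}$.

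The main technical point is justifying the Künneth splitting for $H_2$ of a direct product of finite groups; this follows from the Eilenberg--Zilber theorem applied to the bar resolutions (or equivalently to the classifying spaces $BA$ and $BB$), together with the algebraic Künneth theorem, and I would simply quote it. Once that is in place the remainder is a purely formal computation with tensor products of cyclic groups under the divisibility hypothesis, and an induction bookkeeping step.
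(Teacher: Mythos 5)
Your argument is correct, and it is the standard textbook proof of this result. The paper itself offers no proof at all: the theorem is quoted verbatim from Karpilovsky (\emph{The Schur Multiplier}, Propositions 2.1.1 and 2.5.14), so there is nothing internal to compare against. Your two ingredients are exactly the ones used in that reference: the periodic resolution for a finite cyclic group, giving \(H_2(C_n,\mathbb{Z}) = \ker(\times n : \mathbb{Z}\to\mathbb{Z}) = 0\), and the K\"unneth decomposition \(M(A\times B)\cong M(A)\oplus M(B)\oplus (A^{\mathrm{ab}}\otimes B^{\mathrm{ab}})\), after which the divisibility chain \(n_k \mid \cdots \mid n_1\) makes the tensor products collapse to \(\mathbb{Z}/n_i\) and the induction bookkeeping gives the multiplicity \(i-1\) for each \(\mathbb{Z}/n_i\). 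One small wording slip: in the cyclic computation the outgoing differential at the \(H_2\) spot is induced by the norm \(N\), hence is multiplication by \(n\) (not a zero map); the group is \(\ker(\times n)/\operatorname{im}(0)=0\) because \(\mathbb{Z}\) is torsion-free. That is what your conclusion uses, so the proof stands; just state it that way.
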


\begin{cor}\label{non-schur}
    The non-cyclic finite Abelian group has a non-trivial Schur multiplier.
\end{cor}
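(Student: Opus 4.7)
The plan is to derive the corollary directly from the preceding Schur theorem by invoking the invariant factor decomposition. First I would write the non-cyclic finite abelian group $G$ in its unique invariant factor form
\[
G \cong \mathbb{Z}/n_1\mathbb{Z} \oplus \mathbb{Z}/n_2\mathbb{Z} \oplus \cdots \oplus \mathbb{Z}/n_k\mathbb{Z},
\]
with $n_{i+1}\mid n_i$ and each $n_i \ge 2$. The key structural observation is that non-cyclicity forces $k \ge 2$: if $k=1$ then $G$ would be cyclic, contradicting the hypothesis.

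Next I would apply the formula of the preceding theorem. Since $k \ge 2$, the theorem yields
\[
M(G) \cong \mathbb{Z}/n_2\mathbb{Z} \oplus (\mathbb{Z}/n_3\mathbb{Z})^2 \oplus \cdots \oplus (\mathbb{Z}/n_k\mathbb{Z})^{k-1},
\]
which contains $\mathbb{Z}/n_2\mathbb{Z}$ as a direct summand. Because invariant factors are by convention at least $2$, we have $n_2 \ge 2$, so this summand is a non-trivial cyclic group, and therefore $M(G) \neq 1$.

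There is essentially no obstacle here: the entire content of the corollary is packaged inside the invariant factor decomposition and the quoted Schur formula. The only point requiring a brief comment is justifying that one may take $n_2 \ge 2$ (i.e., that the invariant factor decomposition is normalized so no trivial factors appear), which is a standard convention and can be pointed out in one line. Thus the proof will be a two- or three-sentence deduction rather than a substantial argument.
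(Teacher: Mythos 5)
Your proposal is correct and follows exactly the route the paper intends: the corollary is stated as an immediate consequence of the preceding Schur theorem, and your deduction (non-cyclicity forces $k \ge 2$ in the invariant factor decomposition, so $M(G)$ has the non-trivial summand $\mathbb{Z}/n_2\mathbb{Z}$ with $n_2 \ge 2$) is precisely the omitted argument.
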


The above corollary indicates that if $K_S$ is a finite extension of $K$, a good method to determine whether the group $G_S(K,p)$ is Abelian is to calculate the Schur multiplier. In fact, in 1985, Watt provided the following key theorem:

\begin{thm}\cite{watt1985restricted}\label{thm 4.3}
    Let \(K\) be an imaginary quadratic field that does not contain \(p\)-th roots of unity. Then, the pro-\(p\) group \(G_S(K,p)\) is multiplier-free for any finite set \(S\) of finite primes of \(K\).
\end{thm}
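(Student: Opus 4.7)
The goal is to show that $M(G_S(K,p)) = H_2(G_S(K,p), \mathbb{Z}_p)$ vanishes for every finite set $S$ of finite primes. Since the Schur multiplier of a pro-$p$ group is itself a pro-$p$ abelian group, by Nakayama's lemma it suffices to prove $M(G_S)/p = 0$. I would begin with a minimal pro-$p$ presentation $1 \to R \to F \to G_S(K,p) \to 1$ with $F$ free pro-$p$ of rank $d(G_S)$. The standard five-term exact sequence then reads
$$ 0 \to M(G_S(K,p)) \to R/[F,R] \to F^{\mathrm{ab}} \to G_S(K,p)^{\mathrm{ab}} \to 0, $$
so the task reduces to showing that the relation module $R/[F,R]$ injects into $F^{\mathrm{ab}}$. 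By Pontryagin duality this is equivalent to showing that every class in $H^2(G_S(K,p), \mathbb{F}_p)$ is \emph{decomposable}, i.e., lies in the image of the cup product $H^1(G_S, \mathbb{F}_p)^{\otimes 2} \to H^2(G_S, \mathbb{F}_p)$ together with the Bockstein.

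The main tool I would use is Poitou-Tate duality. Applied with coefficient module $\mathbb{F}_p$ (whose Cartier dual is $\mu_p$), the nine-term sequence yields the segment
$$ H^2(G_S, \mathbb{F}_p) \longrightarrow \bigoplus_{\mathfrak{q} \in S \cup S_p \cup S_\infty} H^2(K_\mathfrak{q}, \mathbb{F}_p) \longrightarrow H^0(G_S, \mu_p)^{\vee} \longrightarrow 0. $$
Since $\zeta_p \notin K$, one has $H^0(G_S, \mu_p) = 0$, and the localization map becomes surjective. Locally at each $\mathfrak{q}$, the group $H^2(K_\mathfrak{q}, \mathbb{F}_p)$ is generated by cup products of local $H^1$ classes: for tame primes $\mathfrak{q} \nmid p$ this follows from the two-generator one-relation presentation of the tame local pro-$p$ Galois group, while for $\mathfrak{q} \mid p$ it is a consequence of the Demushkin structure of $\mathrm{Gal}(\overline{K_\mathfrak{q}}/K_\mathfrak{q})$. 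Thus every class in the target of the localization map is decomposable, and the question becomes whether this decomposability lifts to $H^2(G_S, \mathbb{F}_p)$.

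The main obstacle is precisely this \emph{lifting step}: one must show that local cup-product decompositions assemble into a global decomposition without leaving behind a non-decomposable residual class. This is governed by the kernel of the localization map on $H^1$, which by Proposition~\ref{2.5} is controlled by the group $\Be_S$. The hypothesis $\zeta_p \notin K$ ensures simultaneously that $H^0(K, \mu_p) = 0$ (removing the cyclotomic contribution) and that the $G_K$-action on $\mu_p$ is nontrivial, so the Selmer-type obstruction that would normally block the lift vanishes. Combined with the fact that imaginary quadratic $K$ has $r_1 = 0$ and $\mathcal{O}_K^\times$ of $p$-rank zero (ruling out unit-theoretic obstructions when $p \nmid |\mu_K|$, and handled by a separate check otherwise), local decompositions glue globally. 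This forces $R/[F,R] \hookrightarrow F^{\mathrm{ab}}$, and hence $M(G_S(K,p)) = 0$.
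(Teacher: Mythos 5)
The paper offers no argument for this statement: it is quoted directly from Watt \cite{watt1985restricted}, so there is no internal proof to match your attempt against. Judged on its own, your proposal has a fatal gap at the very first reduction. You claim that injectivity of $R/[F,R]\to F^{\mathrm{ab}}$ (equivalently $M(G_S)=0$) is, by duality, the same as every class of $H^2(G_S,\mathbb{F}_p)$ being a sum of cup products and Bocksteins of degree-one classes. That equivalence is false in both directions as a statement about pro-$p$ groups. For $G=(\mathbb{Z}/p\mathbb{Z})^2$ the whole of $H^2(G,\mathbb{F}_p)$ is spanned by $\beta\chi_1$, $\beta\chi_2$ and $\chi_1\cup\chi_2$, yet $M(G)\cong\mathbb{Z}/p\mathbb{Z}$: the relation $[x,y]$ is perfectly visible to the cup product but dies in $F^{\mathrm{ab}}$, which is exactly what the Hopf formula $M(G)=(R\cap[F,F])/[F,R]$ measures. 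Conversely $G=\mathbb{Z}/p^2\mathbb{Z}$ has $M(G)=0$ while $H^2(G,\mathbb{F}_p)\neq 0$ is annihilated by both the cup product and the first Bockstein for odd $p$. So even a complete proof that every class of $H^2(G_S,\mathbb{F}_p)$ is decomposable would not prove the theorem: decomposability only controls the image of the relations in the degree-two graded piece of the Zassenhaus filtration of $F$, whereas triviality of the multiplier is the integral condition $R\cap[F,F]=[F,R]$.

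Independently of this, the step you yourself flag as the main obstacle --- globalizing the local decomposability --- is asserted rather than proved (``the Selmer-type obstruction vanishes'' and ``local decompositions glue globally'' are conclusions, not arguments), and the Poitou--Tate nine-term sequence you invoke lives over $G_{S\cup S_p\cup S_\infty}$, not over the tame group $G_S$; identifying the two $H^2$'s requires a separate comparison that is not supplied. The proofs in the literature (Watt, in the framework of Fr\"ohlich's central class field theory cited next to the statement) do not proceed via decomposability of $H^2(\cdot,\mathbb{F}_p)$ at all: they compute the multiplier directly in idele-theoretic terms, and the hypotheses that $K$ is imaginary quadratic with $\zeta_p\notin K$ enter by forcing $\mathcal{O}_K^\times$ to be finite of order prime to $p$, which kills the relevant obstruction group. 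If you want a cohomological proof you must work with the correct dual object --- the map $R/[F,R]\to F^{\mathrm{ab}}$ dualizes to a statement about $H^1(F,\mathbb{Q}_p/\mathbb{Z}_p)\to H^1(R,\mathbb{Q}_p/\mathbb{Z}_p)^{F}$ --- not with cup products in $H^2(G_S,\mathbb{F}_p)$.
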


If \(G_S(K,p)\) is finite, the definition of a multiplier-free group being equivalent to the Schur multiplier is trivial. For further details, the reader may refer to \cite{watt1985restricted}, Section 1, and \cite{frohlich1983central}, Equation 3.3. 

At the same time, the following lemma provides essential information regarding our results.

\begin{lem}[\cite{gorenstein2007finite}, Chapter 5, Theorem 4.4 adn Theorem 4.3]\label{extra group}
Let $p$ be an odd prime and $P$ a nonabelian $p$-group of order $p^n$ that contains a cyclic subgroup of order $p^{n-1}$. Then, $P$ is an extraspecial group with presentation $$\left\langle a,b \bigl\vert a^{p^{n-1}} = 1, \, b^p = 1, \, b^{-1}ab = a^{1+p^{n-2}} \right\rangle.$$
\end{lem}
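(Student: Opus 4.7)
The plan is to work with the cyclic subgroup $A = \langle a \rangle$ of index $p$, which is automatically normal in $P$. I would pick any $b \in P \setminus A$; since $P/A$ has order $p$, the set $\{a, b\}$ generates $P$. Normality of $A$ gives $b^{-1}ab = a^r$ for some integer $r$, and because $b^p \in A$ must commute with $a$, iterating conjugation forces $r^p \equiv 1 \pmod{p^{n-1}}$. This congruence is where the assumption on $p$ being odd first bites: for odd $p$ and $n \geq 3$, the unit group $(\mathbb{Z}/p^{n-1}\mathbb{Z})^\times$ is cyclic of order $(p-1)p^{n-2}$, so the $p$-th roots of unity are precisely the residues $1 + k p^{n-2}$ for $k = 0, 1, \ldots, p-1$. (The edge case $n = 2$ just gives an abelian group, which is excluded.)

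Since $P$ is nonabelian, $r \not\equiv 1$, so some $k \in \{1,\ldots,p-1\}$ occurs. Replacing $b$ by $b^m$ for $m \equiv k^{-1} \pmod{p}$ (still outside $A$ since $m$ is a unit mod $p$), I may assume $r = 1 + p^{n-2}$. A direct check shows $[a,b] = a^{-p^{n-2}}$ is centralized by both $a$ and $b$, so $P$ has nilpotency class $\le 2$.

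The remaining task is to normalize $b^p$. Writing $b^p = a^s$, I first observe $p \mid s$: otherwise $a^s$ would generate $A$, forcing $b$ to have order $p^n$ and $P$ to be cyclic, contradicting nonabelianness. Then I apply the class-two identity $(xy)^p = x^p y^p [y,x]^{\binom{p}{2}}$ to $(a^t b)^p$. The commutator term is $a^{-tp^{n-2}\cdot p(p-1)/2} = a^{-t p^{n-1}(p-1)/2}$, which vanishes in $A$ since $(p-1)/2$ is an integer (here again odd $p$ is essential) and $a^{p^{n-1}} = 1$. Hence $(a^t b)^p = a^{tp + s}$, and writing $s = p s'$ I may choose $t = -s'$ so that $c := a^t b$ satisfies $c^p = 1$ while still conjugating $a$ to $a^{1+p^{n-2}}$.

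Finally, $P = \langle a, c\rangle$ with the stated relations. Since the abstract group presented by these relations is a semidirect product $\mathbb{Z}/p^{n-1}\mathbb{Z} \rtimes \mathbb{Z}/p\mathbb{Z}$ of order exactly $p^n$, and $P$ is a quotient of that group of order $p^n$, the surjection is an isomorphism. The main obstacle is the last adjustment step: proving $p \mid s$ and verifying that the commutator correction in the class-two power formula actually disappears modulo $p^{n-1}$. Both hinge on $p$ being odd (through the integrality of $(p-1)/2$ and through the structure of the unit group mod $p^{n-1}$), so any proof for general $p$ necessarily fails in the even case, consistent with the hypothesis.
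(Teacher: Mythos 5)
Your argument is correct and complete. The paper itself offers no proof of this lemma --- it is quoted directly from Gorenstein (Ch.~5, Theorems 4.3 and 4.4) --- and what you have written is essentially the standard classification argument that the cited reference contains: normality of the index-$p$ cyclic subgroup $A=\langle a\rangle$, the congruence $r^p\equiv 1\pmod{p^{n-1}}$ forcing $r=1+kp^{n-2}$, normalization of $r$ by replacing $b$ with a power, the class-$2$ power identity $(xy)^p=x^py^p[y,x]^{\binom{p}{2}}$ to kill $b^p$, and the order count to upgrade the surjection from the presented group to an isomorphism. All the steps check out, including the two places where oddness of $p$ is genuinely used (the $p$-torsion of $(\mathbb{Z}/p^{n-1}\mathbb{Z})^{\times}$ being exactly $\{1+kp^{n-2}\}$, and the integrality of $(p-1)/2$ making the commutator correction vanish), and you correctly dispose of the degenerate case $n=2$. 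One cosmetic remark, directed at the paper's wording rather than at your proof: the group you obtain is the modular group $M_{p^n}$, which is extraspecial only when $n=3$ (for $n>3$ its center has order $p^{n-2}$), so the label ``extraspecial'' in the statement is imprecise even though the presentation --- the part actually used later --- is exactly what you established.
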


Now, set $p$ be an odd rational prime number. We provide the proof of our main theorems.

\subsection{\texorpdfstring{The case of \( S=\{\mathfrak{q}\} \)}{The case of S={q}}}
When \( S \) consists of exactly one prime, the \( p \)-divisibility of the ray class number does not provide sufficient information to control the generator rank of \( G_S(K) \). Additional information about the ray class group is necessary.

We recall a result of Gras-Munnier that provides a criterion for the existence of a totally ramified \(\mathbb{Z}/p\mathbb{Z}\)-extension at a given set \(S\). Let \(K\) be a number field. The governing field of \(K\) is the field \(K(\zeta_p, \sqrt[p]{V_{\varnothing}})\), obtained by adjoining the primitive \(p\)-th roots of unity and the \(p\)-th roots of elements in \(V_{\varnothing}\) to \(K\). We denote this field by \(\operatorname{Gov}(K)\).

\begin{thm}
    [Gras-Munnier,\cite{gras1998extensions}] Let \(S=\{\mathfrak{q}_1,\cdots, \mathfrak{q}_n\}\) be a finite set of places of \(K\) coprime to \(p\). There exists a cyclic degree \(p\) unramified extension \(L/K\) outside \(S\) and totally ramified at each place of \(S\), if and only if, for \(i=1,...,n\), there exists \(c_i\in (\mathbb{Z}/p\mathbb{Z})^{\times}\), such that \[\prod_{\mathfrak{q}_i\in S}\left(\frac{\operatorname{Gov}(K)/K(\zeta_p)}{\mathfrak{q}_i}\right)^{c_i}=1\in \operatorname{Gal}(\operatorname{Gov}(K)/K(\zeta_p)).\]
\end{thm}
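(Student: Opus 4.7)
The plan is to use Kummer theory over \(K'=K(\zeta_p)\) together with Galois descent to \(K\), and to invoke global class field theory to translate the existence problem into a question about Frobenius relations in \(\operatorname{Gov}(K)/K'\). Since \([K':K]\) divides \(p-1\) and is coprime to \(p\), a cyclic degree-\(p\) extension \(L/K\) unramified outside \(S\) corresponds bijectively to an \(\mathbb{F}_p\)-line in the \(\omega\)-isotypic component of a Selmer-type subspace \(\operatorname{Sel}_S \subseteq K'^{\times}/K'^{\times p}\), where \(\omega\) is the mod-\(p\) cyclotomic character. For a Kummer representative \(\alpha\), the extension \(L/K\) is unramified outside \(S\) iff \((\alpha)=\mathfrak{a}^{p}\prod_{i=1}^{n}\mathfrak{q}_i^{b_i}\) for some fractional ideal \(\mathfrak{a}\) (plus the expected local conditions at primes above \(p\)), and \(L/K\) is totally (tamely) ramified at \(\mathfrak{q}_i\) iff \(b_i\not\equiv 0\pmod p\). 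Thus producing \(L\) as in the theorem amounts to producing an \(\alpha\) whose ramification vector \((b_i)_i\) lies in \((\mathbb{F}_p^{\times})^{n}\).

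Next, I would analyse the ramification map
\begin{equation*}
R:\operatorname{Sel}_S^{\omega}\longrightarrow\mathbb{F}_p^{n},\qquad \alpha\longmapsto\bigl(v_{\mathfrak{q}_i}(\alpha)\bmod p\bigr)_{i=1}^{n}.
\end{equation*}
A tuple \((c_i)\) lies in \(\operatorname{Im}(R)\) iff there exists a fractional ideal \(\mathfrak{a}\) such that \(\mathfrak{a}^{p}\prod\mathfrak{q}_i^{c_i}\) is principal, generated by a suitable \(\omega\)-eigenvector, and by Kummer duality this condition can be rephrased as a Frobenius vanishing in the governing field. Concretely, the Kummer pairing
\begin{equation*}
V_{\varnothing}/K^{\times p}\;\times\;\operatorname{Gal}(\operatorname{Gov}(K)/K')\longrightarrow\mu_p
\end{equation*}
is perfect by construction of \(\operatorname{Gov}(K)=K'\bigl(\sqrt[p]{V_{\varnothing}}\bigr)\); combining this with the defining short exact sequence
\begin{equation*}
0\longrightarrow \mathcal{O}_K^{\times}/\mathcal{O}_K^{\times p}\longrightarrow V_{\varnothing}/K^{\times p}\longrightarrow \operatorname{Cl}_K[p]\longrightarrow 0,
\end{equation*}
which encodes both the class-group and the unit contributions, a diagram chase yields the key identification
\begin{equation*}
\operatorname{Im}(R)\;=\;\ker\!\Bigl(\mathbb{F}_p^{n}\longrightarrow\operatorname{Gal}(\operatorname{Gov}(K)/K'),\ e_i\mapsto\operatorname{Frob}_{\mathfrak{q}_i}\Bigr).
\end{equation*}

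Given this identification, the theorem drops out: an extension \(L/K\) as required exists iff \(\operatorname{Im}(R)\) contains a vector with every coordinate in \(\mathbb{F}_p^{\times}\), iff the displayed kernel contains such a vector, iff there exist \(c_1,\dots,c_n\in(\mathbb{Z}/p\mathbb{Z})^{\times}\) with \(\prod_{i}\operatorname{Frob}_{\mathfrak{q}_i}^{c_i}=1\) in \(\operatorname{Gal}(\operatorname{Gov}(K)/K')\), which is the stated criterion. The main obstacle is the identification in the last display of the second paragraph: while its class-group shadow follows directly from Artin reciprocity in the \(p\)-Hilbert class field \(H_p(K)\), promoting it to the full governing field requires Poitou--Tate duality together with a careful descent from \(K'\) back to \(K\) via the \(\omega\)-eigenspace, so that the unit contributions to \(V_{\varnothing}/K^{\times p}\) correspond exactly to the extra component of \(\operatorname{Gal}(\operatorname{Gov}(K)/K')\) beyond \(\operatorname{Gal}(H_p(K)K'/K')\), and the local conditions at primes above \(p\) are reconciled with the Kummer representatives coming from \(K^{\times}\). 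Once this matching is established, the equivalence is immediate.
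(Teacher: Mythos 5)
The paper does not prove this statement: it is quoted as an attributed result of Gras--Munnier, with the reader referred to \cite{gras1998extensions} and \cite{hajir2019shafarevich} for details, so there is no internal argument to compare yours against. Judged on its own, your outline reproduces the standard architecture of the original proof: Kummer theory over \(K'=K(\zeta_p)\) restricted to the \(\omega\)-eigenspace, detection of tame total ramification at \(\mathfrak{q}_i\) by \(v_{\mathfrak{q}_i}(\alpha)\bmod p\), and a global duality identifying the achievable ramification vectors with the \(\mathbb{F}_p\)-linear relations among the classes \(\operatorname{Frob}_{\mathfrak{q}_i}\) in \(\operatorname{Gal}(\operatorname{Gov}(K)/K')\). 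The auxiliary ingredients you list (the \(\omega\)-line correspondence using \(p\nmid[K':K]\), the sequence \(0\to\mathcal{O}_K^\times/\mathcal{O}_K^{\times p}\to V_\varnothing/K^{\times p}\to\operatorname{Cl}_K[p]\to 0\), the perfectness of the Kummer pairing with \(\operatorname{Gal}(\operatorname{Gov}(K)/K')\)) are all correct.

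As a proof, however, the proposal is incomplete, and you concede as much: the entire content of the theorem is the identification \(\operatorname{Im}(R)=\ker\bigl(e_i\mapsto\operatorname{Frob}_{\mathfrak{q}_i}\bigr)\), which you obtain by an unspecified ``diagram chase'' and then admit requires Poitou--Tate duality and a descent argument you do not carry out. That is precisely where the work lies. The way to close it is to dualize the ray class sequence \(1\to E_K/E_K(\mathfrak{m})\to(\mathcal{O}_K/\mathfrak{m})^\times\to\operatorname{Cl}_K(\mathfrak{m})\to\operatorname{Cl}_K\to 1\) modulo \(p\): a tuple of nontrivial characters \(\chi_i\) of \((\mathcal{O}_K/\mathfrak{q}_i)^\times\otimes\mathbb{F}_p\) extends to a character of \(\operatorname{Cl}_K(\mathfrak{m})\otimes\mathbb{F}_p\) (equivalently, cuts out the desired \(L\)) if and only if \(\prod_i\chi_i(\alpha\bmod\mathfrak{q}_i)=1\) for \emph{every} \(\alpha\in V_\varnothing\) --- not merely for units, because any \(\alpha\) with \((\alpha)=\mathfrak{a}^p\) also becomes trivial in \(\operatorname{Cl}_K(\mathfrak{m})\otimes\mathbb{F}_p\), and this is exactly how the \(\operatorname{Cl}_K[p]\) part of \(V_\varnothing/K^{\times p}\) enters; one then rewrites \(\chi_i(\alpha\bmod\mathfrak{q}_i)\) as the \(p\)-th power residue symbol, i.e.\ as the action of \(\operatorname{Frob}_{\mathfrak{q}_i}\) on \(\sqrt[p]{\alpha}\). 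Two smaller points also need attention: the Frobenius symbol requires a choice of prime of \(K'\) above \(\mathfrak{q}_i\), and one must check the criterion is independent of that choice (up to the exponents \(c_i\)); and the case \(N(\mathfrak{q}_i)\not\equiv 1\pmod p\), where no degree-\(p\) cyclic extension can be tamely totally ramified at \(\mathfrak{q}_i\), must be seen to be consistent with the stated condition. As written, your text is a correct roadmap rather than a proof.
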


The Gras-Munnier theorem is a powerful tool for calculating the ray class group and the Shafarevich group of restricted ramification. For further details, refer to \cite{gras1998extensions} and \cite{hajir2019shafarevich}. 
\\

\textbf{The proof of Theorem \ref{s=1}.}
By the Chebotarev density theorem, there are infinitely many primes $\mathfrak{q}' \subseteq \operatorname{Gov}(K)$ that are inert in $\text{Gov}(K) \cdot H_p(K)$ with absolute degree 1. Furthermore, we can select $\mathfrak{q}'$ such that it is totally split over $\mathbb{Q}$. Let $\mathfrak{q} = \mathfrak{q}' \cap \mathcal{O}_K$. It is straightforward to verify that the prime $\mathfrak{q}$ is inert in $H_p(K)$, and all the Frobenius automorphisms of $\operatorname{Gal}(\operatorname{Gov}(K)/K(\zeta_p))$ at primes lying above $\mathfrak{q}$ are trivial. 

By the Gras-Munnier theorem, there exists a cyclic degree $p$ unramified extension of $K$ outside $\mathfrak{q}$, denoted by $M(\mathfrak{q},p)$. Moreover, as shown in the proof of Lemma \ref{lem2.4}, $M(\mathfrak{q},p)$, as a $p$-extension of $K$, is a subfield of $K(\mathfrak{q})$. If the $p$-rank of $Cl_K(\mathfrak{q})$ equals 1, then $M(\mathfrak{q},p)$ is the unique cyclic degree $p$ subfield of $K(\mathfrak{q})$. This leads to a contradiction since $K$ has a non-trivial $p$-class group. Thus, we ascertain the first results.


\begin{figure}[!ht]
\begin{tikzcd}[scale cd=0.7]
	& {H_p(K)(\mathfrak{q}')} \\
	& {K_S} \\
	\\
	{\operatorname{Gov}(K) \cdot H_p(K)} && M \\
	{\operatorname{Gov}(K)} & {H_p(K)} \\
	{K(\zeta_p)} && {M(\mathfrak{q},p)} \\
	& K
	\arrow[no head, from=2-2, to=1-2]
	\arrow["{I_{\mathfrak{Q}}}"', curve={height=12pt}, dashed, no head, from=2-2, to=5-2]
	\arrow[no head, from=4-1, to=5-1]
	\arrow[no head, from=4-3, to=2-2]
	\arrow[no head, from=5-2, to=4-1]
	\arrow[no head, from=5-2, to=4-3]
	\arrow["{\mathbb{Z}/p \mathbb{Z}}"', curve={height=12pt}, dashed, no head, from=5-2, to=7-2]
	\arrow[no head, from=6-1, to=5-1]
	\arrow[no head, from=6-3, to=4-3]
	\arrow[no head, from=7-2, to=5-2]
	\arrow[no head, from=7-2, to=6-1]
	\arrow[no head, from=7-2, to=6-3]
\end{tikzcd}
\vspace{10pt}
\captionof{figure}{The field diagram for Theorem \ref{s=1}}
\end{figure}
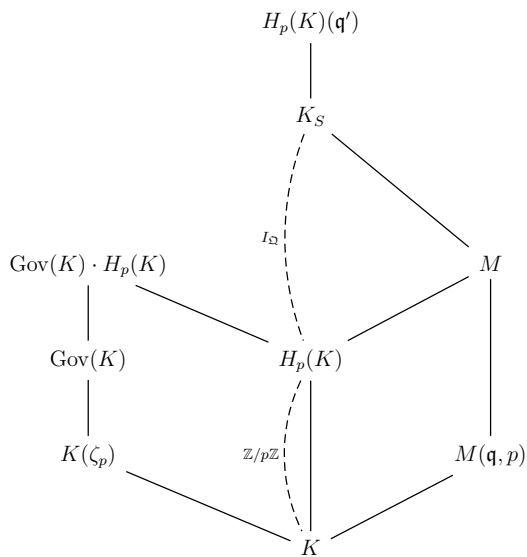

Now, we fix a prime ideal $\mathfrak{q}$ that satisfies the above properties. Let $S = \{\mathfrak{q}\}$, and let $M$ denote the field fixed by the Frattini subgroup of $G_{S}(K,p)$. Then, we have 
\(
H_{p}(K) \cdot M(\mathfrak{q}, p) \subseteq M.
\)
On the other hand, since \(p\vert N(\mathfrak{q})-1\), we have
\begin{align*}
d(G_{S}(K,p)) & \le 1 - r + \delta(K_{\mathfrak{q}}) - \delta(K) \\
& \quad + \dim_{\mathbb{F}_{p}} Cl_{K}(p) + \dim_{\mathbb{F}_{p}} \mathcal{O}_{K}^{\times}/p \le 2.
\end{align*}
It follows that the generator rank of \(G_{S}(K,p)\) is 2. In other words, \(G_{S}(K,p)\) is not a procyclic group.

By Lemma \ref{lem 3.5} and Theorem \ref{thm 4.3}, the Galois group $G_{S}(K,p)$ is a powerful finite $p$-group with a trivial Schur multiplier. On the other hand, it follows from Corollary \ref{non-schur} that $G_{S}(K,p)$ is non-Abelian. 

Let $\mathfrak{Q}$ be a prime of $K_{S}$ lying above $\mathfrak{q}$. Then the decomposition subgroup of $G_{S}(K,p)$ at $\mathfrak{Q}$, denoted by $D _{\mathfrak{Q}}$, equals $G_{S}(K,p)$. Let $I _{\mathfrak{Q}}$ be the inertia subgroup of $D _{\mathfrak{Q}}$. By class field theory, $I _{\mathfrak{Q}}$ is a cyclic $p$-group whose fixed field is the $p$-Hilbert class field of $K$. 
Since the $p$-class group of $K$ is isomorphic to $\mathbb{Z} /p \mathbb{Z}$, if the order of $G_{S}(K,p)$ equals $p ^{n}$, the order of $I _{\mathfrak{Q}}$ equals $p ^{n-1}$. By Lemma \ref{extra group}, $G_{S}(K,p)$ is an extraspecial group. In other words, $$ G_{S}(K,p) = \left\langle a, b \mid a^{p^{n-1}} = 1, b^{p} = 1, b^{-1}ab = a^{1+p^{n-2}} \right\rangle.$$  

Moreover, $K_{S}$ is a cyclic unramified extension of $H_{p}(K)$ outside $\mathfrak{q}'$, which is the prime ideal of $H_{p}(K)$ lying above $\mathfrak{q}$. The maximal \(p\)-extension of \(H_p(K)\) contained in \(H_p(K)(\mathfrak{q}')\) is a Galois extension of \(K\) as \(\mathfrak{q}\) is inert in \(H_p(K)\). By Lemma \ref{lem2.4}, $p^{n-1} \Vert \left|Cl_{H_p(K)}(\mathfrak{q}')\right|$. We have   
$[K_{S}:K] = p \cdot |(Cl _{H _{p}(K)}(\mathfrak{q}'))(p)|.$
\qed

\begin{cor}\label{p^3}
    If the non-$p$-adic prime ideal $\mathfrak{q}$ has the properties and additionally satisfies $p \Vert N(\mathfrak{q})-1$, then the order of $G_S(K,p)$ equals $p^3$.
\end{cor}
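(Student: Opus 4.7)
The plan is to invoke Theorem \ref{s=1} to obtain $|G_S(K,p)| = p^n$ with $n - 1 \geq 2$, i.e.\ $n \geq 3$, and then to use the extra hypothesis $p \Vert N(\mathfrak{q}) - 1$ to force $n \leq 3$. The argument amounts to comparing two computations of the order of the image of the inertia subgroup in the abelianization.

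First, I would work group-theoretically inside the extraspecial presentation. From the proof of Theorem \ref{s=1} the inertia subgroup $I_\mathfrak{Q}$ at a place $\mathfrak{Q}$ of $K_S$ above $\mathfrak{q}$ has fixed field $H_p(K)$, so $I_\mathfrak{Q}$ is a normal subgroup of index $p$, hence has order $p^{n-1}$ and contains the Frattini subgroup. A short calculation from the relation $b^{-1}ab = a^{1+p^{n-2}}$ shows that the commutator subgroup equals $\langle a^{p^{n-2}}\rangle$, cyclic of order $p$. Since $[G_S(K,p), G_S(K,p)] \subseteq I_\mathfrak{Q}$, the image of $I_\mathfrak{Q}$ in $G_S(K,p)^{\mathrm{ab}}$ has order $p^{n-1}/p = p^{n-2}$.

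Next, I would bound this image from above via the ray class material of Section 2. The abelianization $G_S(K,p)^{\mathrm{ab}}$ equals the pro-$p$ part of $\varprojlim_n Cl_K(\mathfrak{q}^n)$, and the image of $I_\mathfrak{Q}$ coincides with the inertia at $\mathfrak{q}$ in this abelian extension. By Lemma \ref{2.1} this inertia is the image of $(\mathcal{O}_K/\mathfrak{q}^n)^\times$ modulo the image of the global units $E_K = \mu_K$. Since $p$ is odd and $K \neq \mathbb{Q}(\sqrt{-3})$ when $p = 3$, we have $p \nmid |\mu_K|$, so the unit image vanishes in the $p$-part, and the $p$-primary part of the inertia equals the $p$-primary part of $(\mathcal{O}_K/\mathfrak{q}^n)^\times$. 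By Theorem \ref{thm2.3} this is cyclic of order $p^{v_p(N(\mathfrak{q}) - 1)}$, which under $p \Vert N(\mathfrak{q}) - 1$ equals $p$.

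Combining the two sides gives $p^{n-2} \leq p$, hence $n \leq 3$; together with $n \geq 3$, we conclude $n = 3$ and $|G_S(K,p)| = p^3$. The only subtle step is verifying that the image of $I_\mathfrak{Q}$ in $G_S(K,p)^{\mathrm{ab}}$ really has order $p^{n-2}$, which relies on the inclusion $[G_S(K,p), G_S(K,p)] \subseteq I_\mathfrak{Q}$; once that is in place the bound coming from the structure of $(\mathcal{O}_K/\mathfrak{q}^n)^\times$ is a direct consequence of the classification of the residue ring recorded in Theorem \ref{thm2.3}.
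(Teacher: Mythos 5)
Your proposal is correct, and it reaches $n=3$ by a genuinely different route from the paper. The paper obtains the upper bound by working over the Hilbert $p$-class field: it cites Lemmermeyer to get that the $p$-class group of $H_p(K)$ is trivial, notes $N(\mathfrak{q}') = N(\mathfrak{q})^p$ so that $v_p(N(\mathfrak{q}')-1) = v_p(N(\mathfrak{q})-1)+1 = 2$, and concludes $|I_{\mathfrak{Q}}| \le p^2$ from the ray class number formula for $Cl_{H_p(K)}(\mathfrak{q}')$; it then excludes $|I_{\mathfrak{Q}}| = p$ by re-running the Schur-multiplier argument (if $|G|=p^2$ then $K_S = M$ is abelian over $K$, contradicting Corollary \ref{non-schur}). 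You instead take the lower bound $n \ge 3$ as already delivered by Theorem \ref{s=1} and get the upper bound entirely over the base field $K$: since $I_{\mathfrak{Q}}$ is the maximal subgroup fixing $H_p(K)$ it contains $[G,G] = \langle a^{p^{n-2}}\rangle$ of order $p$, so its image in $G^{\mathrm{ab}} \cong Cl_K(\mathfrak{q})(p)$ has order $p^{n-2}$, and that image is the inertia of the abelianized extension, hence a quotient of $(\mathcal{O}_K/\mathfrak{q})^\times(p)$, which is cyclic of order $p$ when $p \Vert N(\mathfrak{q})-1$ (here you also use $p \nmid |\mu_K|$ to kill the unit contribution). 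This trades the descent to $H_p(K)$ and the Lemmermeyer input for a small amount of commutator calculus in the extraspecial presentation; both are sound, and yours has the advantage of not needing any arithmetic of $H_p(K)$ beyond what Theorem \ref{s=1} already established. One cosmetic point: for $\mathfrak{q}$ ramified over $\mathbb{Q}$ you should cite Theorem \ref{thm2.4} rather than Theorem \ref{thm2.3} for the structure of $(\mathcal{O}_K/\mathfrak{q}^n)^\times$, but the conclusion (the $p$-part is cyclic of order $p^{v_p(N(\mathfrak{q})-1)}$) is unaffected.
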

\begin{proof}
By \cite{lemmermeyer2010class}, Proposition 1.9.2, the $p$-class group of $H_p(K)$ is trivial. Since $\mathfrak{q}$ is inert in $H_p(K)$, the ideal norm of $\mathfrak{q}'$, the prime ideal of $H_p(K)$ lying above $\mathfrak{q}$, equals $N(\mathfrak{q})^p$. It follows that the order of $I_{\mathfrak{Q}}$ is at most $p^2$.

If $I_{\mathfrak{Q}}$ is isomorphic to $\mathbb{Z}/p\mathbb{Z}$, then $K_S$ would equal the field $M$, which is an abelian extension of $K$. This is a contradiction by Corollary \ref{non-schur}. Hence, $G_S(K,p)$ is an extraspecial group of order $p^3$.
\end{proof}


\subsection{The proof of the Theorem \ref{thm 1.1}}

As the proof of Theorem \ref{finite of two q} showed, $G_S(K,p)$ is a finite powerful group with two generators, and it equals the decomposition subgroup of $G_S(K,p)$ at $\mathfrak{Q}_2$, a prime of $K_S$ lying above $\mathfrak{q}_2$. Let $I_{\mathfrak{Q}_2}$ be the inertia subgroup at $\mathfrak{Q}_2$, with its fixed field denoted by $T_{\mathfrak{q}_2}$.

By class field theory, the inertia field $T_{\mathfrak{q}_2}$ is a cyclic extension over $K$, and we have $M(\mathfrak{q}_1,p) \subseteq T_{\mathfrak{q}_2} \subseteq K(\mathfrak{q}_1^l)$ for some $l$. In fact, by Lemma \ref{lem2.4}, $l=1$. Moreover, $M(\mathfrak{q}_1,p) = T_{\mathfrak{q}_2}$, by Lemma \ref{lem 2.5}, and $p \mid\mid N(\mathfrak{q}_1) - 1$. 

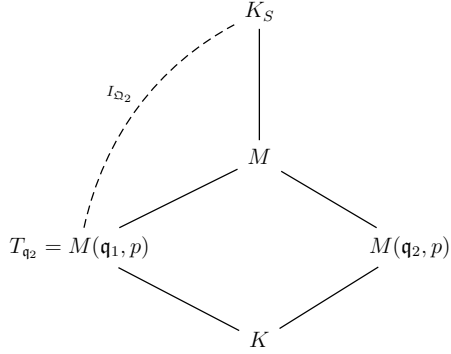
\begin{figure}[ht]
\begin{tikzcd}[scale cd=0.7]
	& {K_S} \\
	\\
	& M \\
	{T_{\mathfrak{q}_2}=M(\mathfrak{q}_1,p)} && {M(\mathfrak{q}_2,p)} \\
	& K
	\arrow[no head, from=3-2, to=1-2]
	\arrow["{I_{\mathfrak{Q}_2}}", curve={height=-18pt}, dashed, no head, from=4-1, to=1-2]
	\arrow[no head, from=4-1, to=3-2]
	\arrow[no head, from=4-3, to=3-2]
	\arrow[no head, from=5-2, to=4-1]
	\arrow[no head, from=5-2, to=4-3]
\end{tikzcd}
\vspace{10pt}
\captionof{figure}{The field diagram for Theorem \ref{thm 1.1}}
\end{figure}
It follows that $G_S(K,p)$ has a maximal $p$-subgroup $I_{\mathfrak{Q}_2}$. On the other hand, $G_S(K,p)$ is non-abelian and non-procyclic, since $G_S(K,p)$ has generator rank 2 and a trivial Schur multiplier. By Lemma \ref{extra group}, $$G_S(K) = \left\langle a,b \mid a^{p^{n-1}} = 1, b^p = 1, b^{-1}ab = a^{1+p^{n-2}}\right \rangle,$$ where $p^n = \text{ord}(G_S(K)).$

Similar to the discussion in the proof of Theorem \ref{s=1}, $K_S$ is an abelian extension over $M(\mathfrak{q}_1,p)$, contained in $M(\mathfrak{q}_1,p)(\mathcal{Q}_1^{n_1}\mathcal{Q}_2^{n_2})$ for some $n_1, n_2 \in \mathbb{N}^{\times}$. However, by Lemma \ref{lem2.4}, $p^m \mid\mid \left|Cl_{M(\mathfrak{q}_1,p)}(\mathcal{Q}_1^{n_1}\mathcal{Q}_2^{n_2})\right|$ if and only if $p^m \mid\mid \left|Cl_{M(\mathfrak{q}_1,p)}(\mathcal{Q}_1 \mathcal{Q}_2)\right|$. Here, $\mathcal{Q}_i$ are the prime ideals of $M(\mathfrak{q}_1, p)$ lying above $\mathfrak{q}_i$ for $i = 1, 2$. 
Since $\mathfrak{q}_1$ is totally ramified and $\mathfrak{q}_2$ is inert in $M(\mathfrak{q}_1, p)$, the maximal $p$-extension of $M(\mathfrak{q}_1, p)$ contained in $M(\mathfrak{q}_1, p)(\mathcal{Q}_1 \mathcal{Q}_2)$ is a Galois extension of $K$.
Hence, the number $n-1$ is the largest such that $p^{n-1}$ divides the ray class number of $M(\mathfrak{q}_1,p)$ modulo $\mathcal{Q}_1 \mathcal{Q}_2$.

\qed

\section{Application}

\begin{ex}
    Let \(K=\mathbb{Q}(\sqrt{-23})\), whose class group is isomorphic to \(\mathbb{Z}/3 \mathbb{Z}\) by \cite{lmfdb}. The prime \(151\) factors into the prime ideals \(\mathfrak{q}_1=(151, \frac{\sqrt{-23} + 85}{2})\) and \(\mathfrak{q}_2=(151, \frac{\sqrt{-23}+217}{2})\) in \(K\). The Hilbert class field \(H(K)\) of \(K\) is defined by the polynomial 
\[
x^6 - 2x^5 + 70x^4 - 90x^3 + 1631x^2 - 1196x + 12743.
\]

By \cite{sagemath}, the ideals \(\mathfrak{q}_i\) for \(i=1,2\) are inert in \(H(K)\), and the \(p\)-rank of \(Cl_{H(K)}(\mathfrak{q}_i)\) is 2. Let \(S\) be either \(\{\mathfrak{q}_1\}\) or \(\{\mathfrak{q}_2\}\). Then \(G_S(K,3)\) is a finite group of order 27 by Corollary \ref{p^3}.
\end{ex}

\begin{ex}\label{ex1}
    Let $K =\mathbb{Q}(i)$ and $\mathfrak{q}_1=7\mathcal{O}_K$. By ramification theory, the field $M(\mathfrak{q}_1,3)=K(\zeta _{7}+ \zeta_7 ^{-1})$. Since 31 is inert in $\mathbb{Q}(\zeta _{7})$, the prime ideal $31 \mathcal{O} _{K}$ is inert in $M(\mathfrak{q}_1,3)$. Now, set $\mathfrak{q}_2=31\mathcal{O} _{K}$ and $S=\{\mathfrak{q}_1,\mathfrak{q}_2\}$. By \cite{sagemath}, we have $3^2 \Vert  \left|Cl_{M (\mathfrak{q}_1,3)}(\mathfrak{q}_1' \mathfrak{q}_2')\right|$. It follows that 
$$ G_{S}(K,3) = \left\langle a,b\mid a^{3^{2}}=1, b^{3}=1, b^{-1}ab=a^{4} \right\rangle,$$ and $|G_{S}(K,3)|=27$  by the Theorem \ref{thm 1.1}. We determine all intermediate fields of $K_S$, which are illustrated in the following diagram and table (computed using \cite{sagemath} and \cite{lmfdb}).
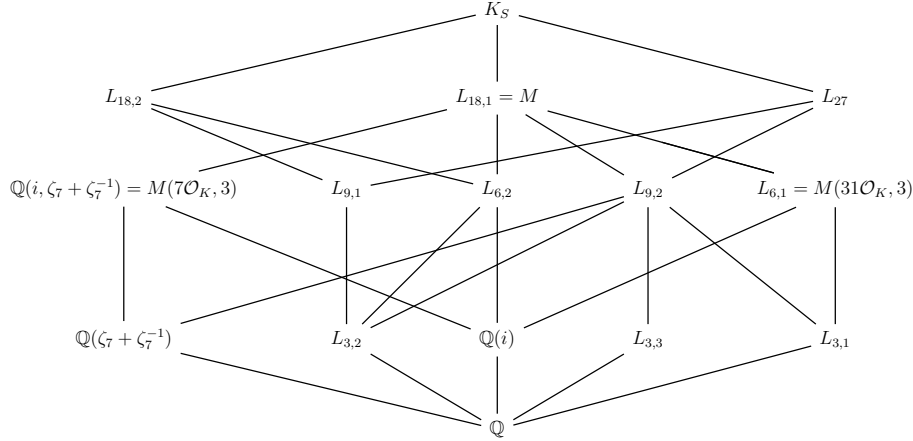
\begin{figure}[ht]
\begin{tikzcd}[scale cd=0.6]
	&& {K_S} \\
	{L_{18,2}} && {L_{18,1}=M} && {L_{27}} \\
	{\mathbb{Q}(i,\zeta_7+\zeta_7^{-1})=M(7\mathcal{O}_K,3)} & {L_{9,1}} & {L_{6,2}} & {L_{9,2}} & {L_{6,1}=M(31\mathcal{O}_K,3)} \\
	\\
	{\mathbb{Q}(\zeta_7+\zeta_7^{-1})} & {L_{3,2}} & {\mathbb{Q}(i)} & {L_{3,3}} & {L_{3,1}} \\
	&& {\mathbb{Q}}
	\arrow[no head, from=2-1, to=1-3]
	\arrow[no head, from=2-3, to=1-3]
	\arrow[no head, from=2-5, to=1-3]
	\arrow[no head, from=3-1, to=2-3]
	\arrow[no head, from=3-2, to=2-1]
	\arrow[no head, from=3-2, to=2-5]
	\arrow[no head, from=3-3, to=2-1]
	\arrow[no head, from=3-3, to=2-3]
	\arrow[no head, from=3-4, to=2-3]
	\arrow[no head, from=3-4, to=2-5]
	\arrow[no head, from=3-4, to=5-1]
	\arrow[no head, from=3-4, to=5-2]
	\arrow[no head, from=3-4, to=5-5]
	\arrow[""{name=0, anchor=center, inner sep=0}, no head, from=3-5, to=2-3]
	\arrow[no head, from=5-1, to=3-1]
	\arrow[no head, from=5-2, to=3-2]
	\arrow[no head, from=5-2, to=3-3]
	\arrow[no head, from=5-3, to=3-1]
	\arrow[no head, from=5-3, to=3-3]
	\arrow[no head, from=5-3, to=3-5]
	\arrow[no head, from=5-3, to=6-3]
	\arrow[no head, from=5-4, to=3-4]
	\arrow[no head, from=5-5, to=3-5]
	\arrow[no head, from=6-3, to=5-1]
	\arrow[no head, from=6-3, to=5-2]
	\arrow[no head, from=6-3, to=5-4]
	\arrow[no head, from=6-3, to=5-5]
	\arrow[no head, from=0, to=3-5]
\end{tikzcd}
\vspace{10pt}
\captionof{figure}{The field diagram for Example 5.2.}
\end{figure}

\begin{table}[ht]
\centering
\caption{Number Fields and LMFDB Labels}
\vspace{5pt}
\begin{tabular}{cccc}
\toprule
Number field & Field label   \\
\midrule
$L_{3,1}$ & $3.3.961.1$  \\
\hline
$L_{3,2}$ &$3.3.47089.1$  \\
\hline
$L_{3,3}$ & $3.3.47089.2$ \\
\hline
$L_{6,1}$ & $6.0.59105344.1$ \\
\hline
$L_{6,2}$ & $6.0.141911930944.3$ \\
\hline
$L_{9,1}$ & $9.9.4916747105530914241.1$ \\
\hline
$L_{9,2}$ & $9.9.104413920565969.1$ \\
\hline
$L_{18,1}$ & $18.0.2857963830104944567197606598672384.1$ \\
\bottomrule
\end{tabular}
\end{table}

In this case, the polynomial defined for the number field $K_S$ is \(P_{7,31}(x)\) as described in the Appendix.
\end{ex}

\bibliographystyle{plain}
\bibliography{refs}

\section*{Appendix}

\begin{tiny}

\begin{align*}
&P_{7,31}(x)=x^{54} + 12x^{53} - 1783x^{52} - 24920x^{51}\\
& + 1418742x^{50} + 23124830x^{49} - 658094536x^{48}\\
& - 12754104138x^{47} + 193582250939x^{46}\\
& + 4680549437534x^{45} - 35897937247714x^{44}\\
& - 1210424844289994x^{43} + 3459824891669503x^{42}\\
& + 227523102612751604x^{41} + 127825646419374612x^{40}\\
& - 31541031306199512502x^{39} - 103299433885895897754x^{38}\\
& + 3227173503047895228474x^{37} + 18577455849803742456606x^{36}\\
& - 239759386799832388807378x^{35} - 2048499696439510191458935x^{34}\\
& + 12298544414109879128683730x^{33} + 157509170731938015861566988x^{32}\\
& - 366889537016519263395405070x^{31} - 8781237213967246977251985414x^{30}\\
& - 138324655529723611214473178x^{29} + 359226295725688104559332528273x^{28}\\
& + 625759658374561519272412906114x^{27} - 10765185515516421149809454284848x^{26}\\
& - 34862502706534531399521875987902x^{25} + 232552988078673642556581149534141x^{24}\\
& + 1132003296610349707197273072759910x^{23} - 3446648750438089011731448922435768x^{22}\\
& - 25263466938106499293905633321155672x^{21} + 29384527492948148265182285831365346x^{20}\\
& + 405065325676044462096496754541512902x^{19} + 12379120881086439400998766806701310x^{18}\\
& - 4690834147962044903702559959261211820x^{17} - 4266983107150271135557860518961187755x^{16}\\
& + 38316273883851502485452647821640584338x^{15} + 64162753691071892616283994573735162388x^{14}\\
& - 205758780841891152376695682905367453016x^{13} - 524079475332910238054658555904263273459x^{12}\\
& + 580982406052016036018682093828948696766x^{11} + 2521172565067605447472641176250136066026x^{10}\\
& + 213668492673402676736308936810588517738x^{9} - 6115834439821560312598482128958360207937x^{8}\\
& - 6508718347371438155981223455611058165130x^{7} + 1687882960398434867905795379450225200583x^{6}\\
& + 10261569509295854003735817154407130798188x^{5} + 12310847145272292985677066754811692755603x^{4}\\
& + 17973891694655829390710173836105859704330x^{3} + 37277823726992262812107481960369177609036x^{2}\\
& + 35866920771719365499542605146210629657136x + 24964752719863841282374259624636967389453.
\end{align*}
\end{tiny}

\end{document}